\newcommand{\el}{\par \mbox{} \par \vspace{-0.5\baselineskip}}
\newcounter{amoi}
\newtheorem{theo}{Theorem}%[section]
\newtheorem{leme}{Lemma}%[section]
\newtheorem{prop}{Proposition}%[section]
\newtheorem{coro}{Corollary}%[section]
\newtheorem{defi}{Definition}%[section]
\newtheorem{rem}{Remark}
\newcommand{\noi}{\noindent}
\newenvironment{pf}{\noi {\el \noi \bf Proof}}{\hfill $\Box$ \el}
\begin{document}
\begin{center}
  {\Large {Categorification of Clifford algebra via geometric induction and restriction}} 
\el

Caroline {\sc Gruson} {\footnote{Equipe de g\'eom\'etrie, U.M.R. 7502 du CNRS, Institut Elie Cartan, 
Universit\'e de Lorraine, BP 239,
54506 Vandoeuvre-les-Nancy Cedex, France. E-mail:
Caroline.Gruson@univ-lorraine.fr}}  and Vera {\sc Serganova} {\footnote
{Department of Mathematics, University of California, Berkeley, CA, 94720-3840 USA.  E-mail: serganov@math.berkeley.edu}}

\end{center}

\bigskip

\section*{Introduction}

In a book published in 1981 (\cite{Zelevinsky}), Andrei Zelevinsky categorified an infinite-rank PSH-algebra in terms of representations of the collection of all $GL(n,\mathbb F)$ where $\mathbb F$ is a finite field. He did this using a pair of adjoint functors, the parabolic induction and its adjoint.

We intend, in this paper, to apply the same set of ideas to the categorification of a infinite Clifford algebra acting on the Fock space of semi-infinite forms, in terms of representations of the collection of all classical supergroups $SOSP(2m+1,2n)$, using the geometric induction functor and its adjoint called geometric restriction.

\bigskip

Let us start with the preliminary example of classical groups.

Let $(G_n)_{n\geq 1}$ be a family of complex classical Lie groups, $G_n$ of rank $n$,  together with inclusions
$$G_1 \subset G_2 \subset \ldots \subset G_n \subset \ldots$$ 
in such a way that $G_{n-1}\times \mathbb C^*$ is the reductive part of a maximal parabolic
subgroup denoted $P_n$ of $G_n$, and we denote the maximal unipotent subgroup of $P_n$ by $U_n$. For instance, consider $G_n = GL(n, \mathbb C)$. We use gothic letters for the corresponding Lie algebras. We denote $\mathcal F_n$ the category of finite-dimensional $G_n$-modules, it is a semi-simple category and we denote $\mathcal K_n$ its Grothendieck group.

We use the functors $\Gamma ^a _i$ and $H^j _b$ defined as follows:
$$\Gamma^a _i:\mathcal F_n \to \mathcal F _{n+1},$$
$$\Gamma^a_i(M):= H^i(G_{n+1}/P_{n+1}, \mathcal L(\mathbb C _{a+n}\boxtimes M)^*)^*$$
where $\mathbb C_a$ is the one-dimensional representation of $\mathbb C^*$ with character $a\in \mathbb Z$; we assume that $U_n$ acts trivially and 
$\mathcal L(\mathbb C _a\boxtimes M)^*$ is the induced vector bundle $G_n \times _{P_{n+1}}(\mathbb C _a\boxtimes M)^*$.
$$H^j _b: \mathcal F_n \to \mathcal F _{n-1},$$ 
$$H^j _b(M):= \operatorname{Hom}_{\mathbb C^*}(\mathbb C_{b+n}, H^j(\mathfrak u_n, M)).$$

At the level of Grothendieck groups we obtain linear maps 
$$\gamma^a: \mathcal K_n \to \mathcal K_{n+1}$$
$$[M]\mapsto \sum_{i}(-1)^i[H^i(G_{n+1}/P_{n+1}, \mathcal L(\mathbb C _{a+n}\boxtimes M)^*)^*],$$

$$\eta_b: \mathcal K_n \to \mathcal K_{n-1}$$
$$[M]\mapsto \sum_{j}(-1)^j[\operatorname{Hom}_{\mathbb C^*}(\mathbb C_{b+n}, H^j(\mathfrak u_n, M))].$$

We set $\mathcal K := \oplus_n \mathcal K_n$ and extend those maps to $\mathcal K$. Then, applying Borel-Weil-Bott theorem, we obtain the following relations, for all $a$ and $b$ in $\mathbb Z$:

\begin{equation}\label{Cliffordgamma}\gamma^a \gamma^b + \gamma^b \gamma^a= 0,
\end{equation}
\begin{equation}\label{Cliffordeta}\eta_a\eta_b+\eta_b\eta_a=0,\end{equation}
\begin{equation}\label{Cliffordgammaeta}\gamma^a \eta_b+\eta_b\gamma^a= \delta_{a,b}Id.
\end{equation}

We recognise those relations as the ones of the infinite dimensional Clifford algebra $\mathbf C$. Furthermore, we see $\mathcal K$ as an irreducible representation of $\mathbf C$ which is induced by the trivial representation of the subalgebra of $\mathbf C$ generated by $(\eta_b)_{b\in \mathbb Z}$.

This provides a categorification of the Clifford algebra $\mathbf C$ by the family of classical groups $(G_n)_{n\geq 1}$.

\bigskip

We follow the same scheme for the family of classical Lie supergroups $SOSP(2m+1,2n)$ when $m$ and $n$ vary, in this case we categorify the representation of the infinite Clifford algebra in the Fock space of semi-infinite forms. In the last section, we explain how our previous categorification work on orthosymplectic Lie superagebras (\cite{VeraCaroII}) can be understood in this context.

\bigskip

We would also like to mention the work of Michael Ehrig and Catharina Stroppel \cite{EhrigStroppel}, who used quantized symmetric pairs in order to refine our previous results on the category of finite dimensional modules over orthosymplectic Lie superalgebras and obtain a diagrammatic description of the endomorphism algebras of projective generators.

It would be very interesting now to construct a canonical basis in the Fock space of semi-infinite forms.

Finally, we would like to emphasize that what we do here can easily be done for all series of classical Lie supergroups, with minor changes only. 

\bigskip

We are grateful to A. Sergeev who suggested to look at Fock spaces, in relation to orthosymplectic groups.
The second author acknowledegs support of the NSF grant DMS1303301.

\section{Basic setting}
We work over the field of complex numbers in the category of $\mathbb Z/2\mathbb Z$-graded spaces. The reader should keep it in 
mind when we consider symmetric and exterior powers.

We denote by 
$\mathfrak g _{m,n}$ the Lie superalgebra $\mathfrak{osp}(2m+1,2n)$ and 
$$\mathfrak g _{\infty, \infty} = \varinjlim _{m,n \rightarrow \infty} \mathfrak g _{m,n}.$$
Further more, we fix an embedding $\mathfrak g _{m,n} \subset \mathfrak g _{\infty, \infty}$.

We also fix a Cartan subalgebra $\mathfrak h \subset \mathfrak g _{\infty, \infty} $ and the standard basis $\{\varepsilon_i,\delta_j\}_{i,j\in\mathbb Z_{>0}}$. 
The roots of  $\mathfrak g _{\infty, \infty}$ in this basis are:

$$(\pm \varepsilon _i), \; \; (\pm \delta_j),$$

$$(\pm \varepsilon _i \pm \delta _j), \;\; (\pm 2 \delta _j),$$

$$(\pm \varepsilon _i \pm \varepsilon _j), \;\; (\pm \delta _i \pm \delta _j),$$
where $i, j$ vary from $1$ to $\infty$, and in the last line, $i\neq j$.

Then the roots of $\mathfrak g _{m,n}$ lie in the subspace generated by $(\varepsilon _i)_{1\leq i \leq m}$ and $(\delta _j)_{1\leq j \leq n}$.

We fix a Borel subalgebra $\mathfrak b _0$ of $(\mathfrak g _{\infty,\infty})_0$ with the set of positive roots 
$$\{\varepsilon _i, 2\delta _j,\; (i,j>0),\;\; \varepsilon _i \pm \varepsilon _j ,\; \delta _i \pm \delta _j \; (i>j>0)\}.$$ 

Inside $\mathfrak g _{m,n}$, we denote by $\mathfrak p _{\underline{m},n}$ (resp $\mathfrak p _{m,\underline{n}}$) the unique parabolic subalgebra containing 
$\mathfrak b _0$ with semi-simple part $\mathfrak g _{m-1,n}$ (resp $\mathfrak g _{m,n-1}$).

We denote by $G_{m,n}$ the supergroup $SOSP(2m+1,2n)$ and by $T_{m,n}$ the maximal torus of $G_{m,n}$ with Lie algebra $\mathfrak h \cap \mathfrak g _{m,n}$. 

For fixed $m$ and $n$, we denote by $\mathcal F _{m,n}$ the category of finite dimensional $G_{m,n}$-modules 
and by $\mathcal K _{m,n}$ its Grothendieck group. 

Let: $\mathcal F := \oplus _{m,n} \mathcal F_{m,n}$ and $\mathcal K := \oplus _{m,n} \mathcal K_{m,n}$.

Let $B$ be a Borel subgroup of $G_{m,n}$ with Lie superalgebra $\mathfrak b$ containing $\mathfrak b_0\cap\mathfrak{g}_{m,n}$ and 
let $\Delta_1^+$ (resp. $\Delta_0^+$) be the set of odd (resp.) even positive roots of $\mathfrak g_{m,n}$.
Set 
$$\rho_B=\frac{1}{2}\sum_{\alpha\in\Delta_0^+}\alpha-\frac{1}{2}\sum_{\alpha\in\Delta_1^+}\alpha.$$

Let $\Lambda _{m,n}$ be the set of weights $\lambda$ such that $\lambda-\rho_B$ is a character of $T_{m,n}$. Independently of the choice of $B$ we have
$$\Lambda_{m,n}:=\{\lambda =a_1\varepsilon_1+\dots+a_m\varepsilon_m+b_1\delta_1+\dots+b_n\delta_n\,|\,a_i,b_j\in\frac{1}{2}+\mathbb Z\}.$$
We set  
$$\Lambda ^+ _{m,n}:=\{\lambda \in \Lambda _{m,n}\,|\,a_i,b_j\in \frac{1}{2}+\mathbb N,\, a_1<\dots<a_m\,,b_1<\dots<b_n\}.$$

Let $\nu$ be a 
character of $T_{m,n}$. We denote by $\mathcal L _\nu$ the corresponding line bundle over $G_{m,n}/B$.

Recall the definition of the {\it Euler characteristic}. For every $\lambda\in \Lambda_{m,n}$ we set
$$\mathcal E (\lambda) := \left[\sum _{i\geq 0}(-1)^iH^i(G_{m,n}/B,\mathcal L _{\lambda-\rho_B}^*)^*\right]\in \mathcal K _{m,n}.$$

Recall also that the character of this virtual module is easy to compute, namely
$$Ch(\mathcal E (\lambda))= \frac{D_0}{D_1}\sum _{w\in W_{m,n}}\varepsilon(w)e^{w(\lambda)},$$ where $W_{m,n}$ is the Weyl group of $SO(2m+1)\times SP(2n)$,  
$D_0= \Pi_{\alpha \in \Delta _0 ^+}(e^{\alpha /2}-e^{-\alpha /2})$, $D_1= \Pi_{\alpha \in \Delta _1^+}(e^{\alpha /2}+e^{-\alpha /2})$.

(Remark: We avoided indexes $m$ and $n$ in this formula since one can easily recover them from the shape of $\lambda$).

Note that if we change our choice of $B$ containing $B_0\cap G_{m,n}$, the character of $\mathcal E (\lambda)$ doesn't change, 
thus the class in $\mathcal  K_{m,n}$ remains the same, see \cite{VeraCaroII}. 

For $w \in W_{m,n}$, notice that 
\begin{equation}\label{Weyl}\mathcal E(w(\lambda)) = \varepsilon (w) \mathcal E (\lambda).\end{equation}

\begin{prop} (see \cite{VeraCaroII}) The set 
$$\{ \mathcal E(\lambda), \lambda \in \Lambda ^+ _{m,n}  \}$$
gives a linearly independant family in $\mathcal K _{m,n}$, and we denote by $\mathcal K(\mathcal E )_{m,n}$ the subgroup generated by this family. We also set $\mathcal K (\mathcal E):= \oplus _{m,n}\mathcal K(\mathcal E )_{m,n}$.
\end{prop}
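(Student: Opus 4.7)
The plan is to reduce linear independence in $\mathcal K_{m,n}$ to linear independence of formal characters, and then exploit the $W_{m,n}$-antisymmetry of the numerator in the displayed character formula.

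First I would recall that the character map $Ch\colon \mathcal K_{m,n}\to \mathbb Z[\Lambda_{m,n}]$ is injective. Although $\mathcal F_{m,n}$ is not semisimple, its Grothendieck group is freely generated by the classes of the simple $G_{m,n}$-modules, and these have pairwise distinct highest weights, so their characters are linearly independent in the formal character ring. Consequently, it suffices to prove that the characters $Ch(\mathcal E(\lambda))$, for $\lambda \in \Lambda^+_{m,n}$, are linearly independent.

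Next I would use the formula $Ch(\mathcal E(\lambda)) = \frac{D_0}{D_1}\,A_\lambda$, where
$$A_\lambda := \sum_{w\in W_{m,n}} \varepsilon(w)\, e^{w(\lambda)}.$$
Suppose $\sum_{\lambda\in \Lambda^+_{m,n}} c_\lambda\, Ch(\mathcal E(\lambda)) = 0$. Multiplying by $D_1$ and cancelling the nonzero Weyl denominator $D_0$ inside the appropriate integral domain of formal characters yields $\sum c_\lambda A_\lambda = 0$. Now observe that $W_{m,n} = W(B_m)\times W(C_n)$ acts on the coordinates $(a_1,\dots,a_m;b_1,\dots,b_n)$ by independent permutations and sign changes. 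Any $\lambda\in\Lambda^+_{m,n}$ has all coordinates strictly positive, half-integral, and pairwise distinct in each block, so its stabilizer in $W_{m,n}$ is trivial and its orbit contains $|W_{m,n}|$ distinct weights. Moreover, $\Lambda^+_{m,n}$ is a fundamental domain for this action on such weights, so distinct $\lambda,\mu\in\Lambda^+_{m,n}$ give disjoint $W_{m,n}$-orbits. Hence the supports of the $A_\lambda$ are pairwise disjoint and each $A_\lambda$ is nonzero, forcing $c_\lambda = 0$ for every $\lambda$.

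The main subtlety to handle carefully is the cancellation of the Weyl denominator $D_0$: one has to place the formal sums in a suitable ring (e.g.\ the group algebra of $\mathfrak h^\ast$ completed with respect to the cone of positive roots, or the subring of $W_0$-skew-invariants multiplied by $D_0$) in which $D_0$ is a nonzerodivisor. Once this technical framework is fixed, the argument is formally identical to the classical Weyl character proof of linear independence, with the odd denominator $D_1$ playing the role of an invertible formal factor. Note also that relation (\ref{Weyl}) confirms the necessity of restricting to $\Lambda^+_{m,n}$: on the whole lattice $\Lambda_{m,n}$ the $\mathcal E(\lambda)$ satisfy the antisymmetry relations $\mathcal E(w\lambda)=\varepsilon(w)\mathcal E(\lambda)$, and $\Lambda^+_{m,n}$ selects precisely one orbit representative.
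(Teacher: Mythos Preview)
Your argument is correct. The injectivity of the character map on $\mathcal K_{m,n}$ is exactly what the paper invokes later (``We can identify the Grothendieck ring with the ring of characters''), and once you pass to characters the rest is a clean Weyl-group disjoint-orbit argument: for $\lambda\in\Lambda^+_{m,n}$ the stabiliser in $W_{m,n}=W(B_m)\times W(C_n)$ is trivial, distinct dominant $\lambda$'s lie in distinct orbits, and $D_0$ is a nonzerodivisor in the Laurent polynomial ring of the enlarged half-integral weight lattice, so cancellation is legitimate. The one cosmetic point is that $D_1$ is not literally invertible in that ring; but your actual manipulation only multiplies by $D_1$ and cancels $D_0$, which is exactly right.

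As for comparison with the paper: there is nothing to compare. The paper does not prove this proposition here; it simply cites \cite{VeraCaroII} and moves on. Your write-up therefore supplies a self-contained proof where the paper gives only a reference, and the method you use (linear independence of the alternants $A_\lambda$) is the standard and most direct one.
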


\section{Fock space}\label{Fock}

Let $V$ be a countable dimensional vector space together with a basis $(v_i)_{i\in \frac{1}{2}+\mathbb Z}$ and similarly $W$ with a basis $(w_i)_{i\in \frac{1}{2}+\mathbb Z}$ with a non-degenerate pairing such that $(v_i)$ and $(w_i)$ are dual bases.

Let $Cl(V\oplus W)$ be the {\it Clifford algebra} of $V\oplus W$, namely if we denote by $T(V\oplus W)$ the tensor algebra of $V\oplus W$,
 
$$Cl(V\oplus W)=$$
$$=T(V\oplus W)/(v\otimes v'+v'\otimes v, w\otimes w'+w'\otimes w, v\otimes w +w\otimes v -(v,w), v,v' \in V, w , w' \in W).$$

The {\it Fock space} of semi-infinite forms, $\mathbf F$, is the vector space generated by

$$v_{i_1}\wedge \ldots \wedge v_{i_k}\wedge \ldots,$$
for $i_1>\ldots ... >i_k>...$ such that, for $n$ large enough, $i_{n} = i _{n-1} -1$.

\el

There is a natural linear action of $Cl(V\oplus W)$ on $\mathbf F$ given by:

$$\forall v \in V,\; v \bullet  v_{i_1}\wedge \ldots \wedge v_{i_k}\wedge v_{i_{k+1}}\ldots
= v\wedge v_{i_1}\wedge \ldots \wedge v_{i_k}\wedge v_{i_{k+1}}\ldots$$
$$\forall w \in W,\; w\bullet  v_{i_1}\wedge \ldots \wedge v_{i_k}\wedge v_{i_{k+1}}\ldots
=\sum _j  (-1)^{j-1}(w, v_{i_j}) v_{i_1}\wedge \ldots \wedge \hat v_{i_j}\wedge \ldots$$
Define the {\it vacuum vector} in $\mathbf F$ as 
$$\vert >:= v_{-\frac{1}{2}}\wedge v_{-\frac{3}{2}}\wedge \ldots$$ 
then, for $i<0$, $v_i$ acts on $\vert >$ by $0$ as $w_j$ for $j>0$.

We can also see $\mathbf F$ as an induced module the following way. Denote by $Cl^+(V\oplus W)$  the subalgebra generated by $\{v_i, i<0 , \; w_j, j>0\}$, consider 
its trivial module and induce to the whole $Cl(V\oplus W)$: this gives another construction of $\mathbf F$.

Let $\lambda=\sum _{1\leq i \leq m, 1\leq j \leq n}a_i\varepsilon _i +b_j \delta _j \in \Lambda ^+ _{m,n}$.
We define a $\mathbb Z$-linear map $f$: $ \mathcal K (\mathcal E)  \longrightarrow \mathbf F$ such that for any 
$\mathcal E (\lambda) \in \mathcal K (\mathcal E)_{m,n}$:

$$\mathcal E (\lambda) \mapsto  v_{a_m} \wedge \ldots \wedge v_{a_1} \wedge \ldots \wedge\hat{v}_{-b_1} \wedge \ldots \wedge \hat{v}_{-b_n}\wedge \ldots$$

\section{Duality between geometric induction and restriction}
In this section we will consider 3 different Grothendieck groups for $G_{m,n}$ namely $\mathcal K(P)_{m,n}$ generated by the indecomposable projective modules, 
$\mathcal K(\mathcal E)_{m,n}$ which we already met and $\mathcal K(L)_{m,n}:=\mathcal K_{m,n}$ generated by the simple modules. 
After tensoring by the rational numbers $\mathbb Q$, 
$\mathcal K(P)_{m,n}\otimes \mathbb Q$ and $\mathcal K(\mathcal E)_{m,n}\otimes \mathbb Q$  coincide (see \cite{VeraCaroII}). 
We consider the natural pairing between $\mathcal K(P)_{m,n}$ and $\mathcal K(L)_{m,n}$, $\langle[P],[L]\rangle:=\dim\operatorname{Hom}(P,L)$. 
The restriction of this pairing to 
$\mathcal K(P)_{m,n}\times \mathcal K(P)_{m,n}$ is symmetric (and therefore it is a scalar product): 
indeed $\dim\operatorname{Hom}(P_1,P_2)= \dim\operatorname{Hom}(P_2^*,P_1^*)$ and in this case projective modules happen to be self-dual (see \cite{Vera}). 
\begin{prop} Let us extend the scalar product from $\mathcal K(P)_{m,n}$ to $\mathcal K(P)_{m,n}\otimes \mathbb Q$. Then the set of $\mathcal E(\lambda)$, when $\lambda$ varies in
$\Lambda ^+ _{m,n}$, form an orthonormal basis of $\mathcal K(P)_{m,n}\otimes \mathbb Q$.
\end{prop}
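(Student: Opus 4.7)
Linear independence of $\{\mathcal E(\lambda)\}_{\lambda\in\Lambda^+_{m,n}}$ is the previous proposition, and since $\mathcal K(\mathcal E)_{m,n}\otimes\mathbb Q=\mathcal K(P)_{m,n}\otimes\mathbb Q$, this family is automatically a $\mathbb Q$-basis of $\mathcal K(P)_{m,n}\otimes\mathbb Q$. What remains is the orthonormality $\langle\mathcal E(\lambda),\mathcal E(\mu)\rangle=\delta_{\lambda\mu}$.

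My plan is to read off the Gram matrix through the two dual bases $\{[P_\nu]\}$ and $\{[L_\nu]\}$, for which $\langle[P_\nu],[L_\sigma]\rangle=\delta_{\nu\sigma}$ by definition. Writing $\mathcal E(\lambda)=\sum_\nu d_{\lambda\nu}[P_\nu]=\sum_\nu c_{\lambda\nu}[L_\nu]$ one gets $\langle\mathcal E(\lambda),\mathcal E(\mu)\rangle=\sum_\nu d_{\lambda\nu}c_{\mu\nu}$. The coefficients $c_{\lambda\nu}$ come from the character formula $Ch(\mathcal E(\lambda))=\frac{D_0}{D_1}\sum_w\varepsilon(w)e^{w\lambda}$ after expansion into weight and simple characters; the matrices $D=(d_{\lambda\nu})$ and $C=(c_{\lambda\nu})$ are linked by the Cartan matrix $A_{\mu\nu}=[P_\mu:L_\nu]$ via $C=DA$, and $A$ is symmetric by the self-duality of indecomposable projectives \cite{Vera}.

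Orthonormality then becomes the matrix identity $DAD^\top=I$, a form of BGG-type reciprocity between composition and projective multiplicities of Euler characteristics, which for orthosymplectic Lie superalgebras is established in \cite{VeraCaroII}. Conceptually, after cancellation of the super factor $D_0/D_1$ present in $Ch(\mathcal E(\lambda))$, this identity reduces to the classical Weyl orthogonality of the antisymmetrizers $A_\lambda=\sum_w\varepsilon(w)e^{w\lambda}$, which for strictly dominant $\lambda,\mu$ evaluates to $|W_{m,n}|\,\delta_{\lambda\mu}$; the alternating relation (\ref{Weyl}) guarantees that each Weyl orbit has a unique representative in $\Lambda^+_{m,n}$, so no overcounting occurs.

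The main obstacle is translating the $\operatorname{Hom}$-pairing into a character-level inner product in the presence of odd roots, which introduce the denominator $D_1$. Rigorously, this amounts either to the BGG reciprocity of \cite{VeraCaroII} or, equivalently, to a super-analog of the Weyl integral formula weighted by $D_1\bar D_1/D_0\bar D_0$; both ultimately rest on the self-duality of projectives \cite{Vera} and on the geometric realization of $\mathcal E(\lambda)$ as cohomology of line bundles on the flag supervariety $G_{m,n}/B$.
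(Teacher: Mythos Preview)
Your argument is correct and, once the heuristic portions are stripped away, is essentially the paper's proof: both reduce orthonormality to the weak BGG reciprocity of \cite{VeraCaroII}. In the paper's notation one writes $[P(\lambda)]=\sum_\mu b_{\lambda,\mu}\mathcal E(\mu)$ and $\mathcal E(\mu)=\sum_\nu a_{\mu,\nu}[L(\nu)]$, and then $\mathcal E(\mu)=\sum_\lambda c_{\mu,\lambda}[P(\lambda)]$; the pairing is computed directly as $\langle\mathcal E(\mu),\mathcal E(\nu)\rangle=\sum_\lambda c_{\mu,\lambda}a_{\nu,\lambda}$, and BGG reciprocity $b_{\lambda,\nu}=a_{\nu,\lambda}$ turns this into $\sum_\lambda c_{\mu,\lambda}b_{\lambda,\nu}=\delta_{\mu,\nu}$, the latter being simply the fact that $(c_{\mu,\lambda})$ and $(b_{\lambda,\nu})$ are inverse change-of-basis matrices. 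This is exactly your identity $DC^\top=I$.

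Your route through the Cartan matrix $A$ and its symmetry (so that $DC^\top=DAD^\top$) is correct but an unnecessary detour: BGG reciprocity in its standard form already says $D^{-1}=C^\top$, hence $DC^\top=I$ without ever introducing $A$. The discussion of the character formula, Weyl orthogonality of antisymmetrizers, and a ``super Weyl integral formula'' is, as you acknowledge, only a heuristic gloss and plays no role in the actual argument; the rigorous content is precisely the BGG reciprocity you cite.
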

\begin{proof} Let $L(\lambda)$ denote the simple module with highest weight $\lambda$ and $P(\lambda)$ denote its projective cover.
Consider the decompositions
$$[P(\lambda)]=\sum_{\mu}b_{\lambda,\mu}\mathcal E(\mu),\quad \mathcal  E(\mu)=\sum_{\nu}a_{\mu,\nu}[L(\nu)].$$
By the weak BGG reciprocity, \cite{VeraCaroII}, we have $b_{\lambda,\mu}=a_{\mu,\lambda}$.
Now, we write
$$\mathcal E(\mu)= \sum_{\lambda} c_{\mu,\lambda}[P(\lambda)].$$
Then, clearly, we have the following relation
$$\sum_{\lambda}c_{\mu,\lambda}b_{\lambda,\nu}=\sum_{\lambda}c_{\mu,\lambda}a_{\nu,\lambda}=\delta_{\mu,\nu}.$$
On the other hand,  $$\langle[P(\lambda)],[L(\kappa)]\rangle=\delta_{\lambda,\kappa}.$$
Therefore
$$\langle \mathcal E(\mu),\mathcal E(\nu)\rangle=\sum_{\lambda,\kappa}c_{\mu,\lambda}a_{\nu,\kappa}\langle[P(\lambda)],[L(\kappa)]\rangle=
\sum_{\lambda}c_{\mu,\lambda}a_{\nu,\lambda}=\delta_{\mu,\nu}.$$
\end{proof}

Let $G$ be a quasireductive algebraic supergroup, which is an algebraic supergroup with reductive even part (see \cite{Vera} for information on 
their representation theory). Let $Q\subset G$ be a parabolic subgroup with quasireductive part $K$. Let $\mathfrak g,\mathfrak q,\mathfrak k$ denote 
the respective Lie superalgebras, and let $\mathfrak r$ denote the nil-radical of $\mathfrak q$. Consider the following derived functors
$\Gamma_i: K-\operatorname{mod}\longrightarrow G-\operatorname{mod}$ and $H^i: G-\operatorname{mod}\longrightarrow K-\operatorname{mod}$ defined by
$$\Gamma_i(M):=H^i(G/Q, \mathcal L(M^*))^*,\quad H^i(N):=H^i(\mathfrak r,N).$$ Here we denote by $\mathcal L(M^*)$ the vector bundle on $G/Q$ induced from $M^*$.
The collection of functors $\Gamma _i$ is referred to as {\it geometric induction} while that of $H^i$ is referred to as {\it geometric restriction}.

The following observation is due to Penkov \cite{Penkov}.
\begin{prop}\label{penrem} For any $K$-module $M$ we have
$$\sum_{i} (-1)^i[\Gamma_i(M)]=\sum_{i}(-1)^i [H^i(G_0/Q_0, \mathcal L(S^\bullet(\mathfrak r)\otimes M^*))^*].$$
\end{prop}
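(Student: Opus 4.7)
My plan is to transfer the cohomology computation from the super flag variety $G/Q$ down to the underlying classical flag $G_0/Q_0$ via the ideal-sheaf filtration attached to the closed embedding $i: G_0/Q_0 \hookrightarrow G/Q$. Since $G/Q$ is a split supermanifold whose reduced scheme is exactly $G_0/Q_0$, the ideal sheaf $\mathcal{I}$ of $i$ is nilpotent, and $\mathcal{O}_{G/Q}$ admits a finite filtration $\mathcal{O}_{G/Q}\supset \mathcal{I}\supset \mathcal{I}^2\supset \cdots \supset 0$ whose associated graded is a locally free sheaf on $G_0/Q_0$. Using the tangent-space description at the base point together with the non-degenerate $K$-invariant pairing between $\mathfrak{r}$ and $\mathfrak{g}/\mathfrak{q}\cong \mathfrak{r}^-$, one identifies
$$\mathrm{gr}\,\mathcal{O}_{G/Q} \;\cong\; \bigoplus_{k\geq 0}\mathcal{I}^k/\mathcal{I}^{k+1} \;\cong\; \mathcal{L}(S^\bullet \mathfrak{r}),$$
with $S^\bullet$ in the super-symmetric convention.

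I would then induce this filtration on the bundle $\mathcal{L}(M^*)=G\times_Q M^*$ by setting $\mathcal{F}^k := \mathcal{I}^k\cdot \mathcal{L}(M^*)$. Because $M$ is pulled back from a $K$-module, the nilradical $\mathfrak{r}$ acts trivially on $M^*$, so the graded pieces decouple and one gets the identification
$$\mathcal{F}^k/\mathcal{F}^{k+1} \;\cong\; \mathcal{L}(S^k\mathfrak{r}\otimes M^*)$$
as $G_0$-equivariant sheaves on $G_0/Q_0$. Additivity of the Euler characteristic along the short exact sequences $0\to \mathcal{F}^{k+1}\to \mathcal{F}^k\to \mathcal{F}^k/\mathcal{F}^{k+1}\to 0$ then yields
$$\sum_i (-1)^i [H^i(G/Q,\mathcal{L}(M^*))] \;=\; \sum_i (-1)^i [H^i(G_0/Q_0,\mathcal{L}(S^\bullet\mathfrak{r}\otimes M^*))],$$
and dualising both sides term by term in the Grothendieck group completes the proof.

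The delicate point, and the main obstacle I expect, is the $G_0$-equivariance of the identification $\mathcal{F}^k/\mathcal{F}^{k+1}\cong \mathcal{L}(S^k\mathfrak{r}\otimes M^*)$: one must verify that the Koszul decomposition of $\mathcal{O}_{G/Q}$ is compatible with the $Q$-equivariant structure on $M^*$, not just fibre-by-fibre. This hinges on having a $G_0$-equivariant splitting of $G/Q$ over $G_0/Q_0$, which is available from the $\mathbb{Z}$-grading of $\mathfrak{g}$ determined by the parabolic $\mathfrak{q}$; arranging this splitting carefully so that it intertwines the Koszul filtration and the pulled-back $Q$-module structure on $M^*$ is the main technical work, and everything else reduces to routine Euler-characteristic bookkeeping.
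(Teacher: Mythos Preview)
The paper does not prove this proposition; it simply attributes the observation to Penkov with a citation. Your argument via the ideal-sheaf filtration of $\mathcal{O}_{G/Q}$ and additivity of the Euler characteristic is exactly Penkov's approach, so in that sense you have reconstructed the intended proof.

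One clarification is worth making. The filtration by powers of the ideal $\mathcal{I}$ of odd functions is finite precisely because $\mathcal{I}$ is generated by odd elements; its associated graded is the (super)symmetric algebra on the \emph{odd} conormal, so what you actually obtain is
\[
\mathrm{gr}\,\mathcal{O}_{G/Q}\;\cong\;\mathcal{L}\bigl(S^\bullet((\mathfrak{g}/\mathfrak{q})_1^{*})\bigr)\;\cong\;\mathcal{L}\bigl(S^\bullet(\mathfrak{r}_{\bar 1})\bigr),
\]
not $S^\bullet$ of the full nilradical $\mathfrak{r}$. Writing $S^\bullet(\mathfrak{r})$ with $\mathfrak{r}_{\bar 0}\neq 0$ would give an infinite-rank sheaf, which cannot be the structure sheaf of a finite-dimensional supermanifold. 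The paper's own applications of the proposition later use only $(\mathfrak{r}^*)_{\bar 1}$, confirming that this is the intended meaning; you are faithfully reproducing the paper's notation, but it is worth being explicit about this in your write-up.

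Your worry about the $G_0$-equivariant splitting is well placed but resolvable: homogeneous superspaces $G/Q$ for basic classical $G$ are $G_0$-equivariantly split, and this is precisely the technical content Penkov supplies. Once that splitting is granted, the identification $\mathcal{F}^k/\mathcal{F}^{k+1}\cong\mathcal{L}(S^k(\mathfrak{r}_{\bar 1})\otimes M^*)$ is automatic and the rest is, as you say, Euler-characteristic bookkeeping.
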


\begin{prop} For every projective $G$-module $P$, every $K$-module $M$ and $i\geq 0$ there is a canonical isomorphism
$$\operatorname{Hom}_G(\Gamma_i(M),P)\simeq \operatorname{Hom}_K(M,H^i(P)).$$
\end{prop}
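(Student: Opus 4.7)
The plan is to establish this result as a derived Frobenius reciprocity. The crucial structural input is that projective $G$-modules are self-dual (recalled at the start of Section~3 from \cite{Vera}), hence also injective, so that $\operatorname{Hom}_G(-,P)$ is exact. From this I would deduce a second useful fact: the restriction $P|_K$ is an injective $K$-module, since $\operatorname{Hom}_K(-,P|_K)=\operatorname{Hom}_G(\operatorname{Ind}_K^G(-),P)$ is a composition of two exact functors: $\operatorname{Ind}_K^G(-)=U(\mathfrak g)\otimes_{U(\mathfrak k)}(-)$ is exact because $U(\mathfrak g)$ is $U(\mathfrak k)$-free by PBW, and $\operatorname{Hom}_G(-,P)$ is exact by injectivity of $P$.

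The base case $i=0$ is ordinary Frobenius reciprocity: dualising the standard adjunction $\operatorname{Hom}_G(N,H^0(G/Q,\mathcal L(M^*)))=\operatorname{Hom}_Q(N|_Q,M^*)$ at $N=P^*$ yields $\operatorname{Hom}_G(\Gamma_0M,P)=\operatorname{Hom}_Q(M,P|_Q)$, and since the $K$-module $M$ carries the trivial $\mathfrak r$-action, every such $Q$-equivariant map factors through $P^{\mathfrak r}=H^0(\mathfrak r,P)$ as a $K$-linear map.

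For $i>0$ I would compute $\operatorname{Ext}^i_Q(M,P|_Q)$ in two ways. On one side, deriving the adjunction above and using the injectivity of $P$ to collapse the left-hand side gives $\operatorname{Ext}^i_Q(M,P|_Q)\simeq\operatorname{Hom}_G(\Gamma_iM,P)$. On the other side, resolve $M$ as a $Q$-module by the Koszul resolution $U(\mathfrak q)\otimes_{U(\mathfrak k)}\Lambda^\bullet\mathfrak r\otimes M\to M$ attached to the decomposition $\mathfrak q=\mathfrak k\oplus\mathfrak r$. Applying $\operatorname{Hom}_Q(-,P|_Q)$ and using Frobenius reciprocity along $K\hookrightarrow Q$ at each level converts this into $\operatorname{Hom}_K(M,-)$ applied to the Chevalley--Eilenberg complex $\Lambda^\bullet\mathfrak r^*\otimes P|_K$ that computes $H^\bullet(\mathfrak r,P)$. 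By the observation above, each term $\Lambda^j\mathfrak r^*\otimes P|_K$ is $K$-injective (injectivity is preserved by tensoring with a finite-dimensional module), so $\operatorname{Hom}_K(M,-)$ is exact on these terms and therefore commutes with cohomology, delivering $\operatorname{Hom}_K(M,H^i(\mathfrak r,P))$.

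The hard part is working consistently in the super setting: one must track parity in the Koszul and Chevalley--Eilenberg complexes, verify Frobenius reciprocity for $K\subset Q$ in the form required, and check that the derived adjunction $R\operatorname{Hom}_G(L\Gamma M,P)\simeq R\operatorname{Hom}_Q(M,P|_Q)$ is set up so that, under the collapse provided by the injectivity of $P$, its $i$-th cohomology really matches $\operatorname{Hom}_G(\Gamma_iM,P)$ and not a shifted version.
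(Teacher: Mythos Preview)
Your argument follows the same architecture as the paper's: both identify each side of the claimed isomorphism with an $\operatorname{Ext}^i_Q$ between $M$ and $P$, one identification coming from Frobenius reciprocity (derived using exactness of $\operatorname{Hom}_G(\cdot,P)$) and the other from the Koszul complex attached to $\mathfrak q=\mathfrak k\oplus\mathfrak r$. The paper computes $\operatorname{Ext}^i_Q(P,M)$ and dualizes at the very end via $P\simeq P^*$ and $H_i(\mathfrak r,P^*)^*\simeq H^i(\mathfrak r,P)$, whereas you dualize first and work with $\operatorname{Ext}^i_Q(M,P|_Q)$; this is an immaterial difference.

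The one point of genuine divergence is your justification that $P|_K$ is injective. Your adjunction argument $\operatorname{Hom}_K(-,P|_K)\simeq\operatorname{Hom}_G(U(\mathfrak g)\otimes_{U(\mathfrak k)}-,P)$ has the drawback that $U(\mathfrak g)\otimes_{U(\mathfrak k)}N$ is not finite-dimensional, since $\mathfrak g/\mathfrak k$ has a nonzero even part; so to use exactness of $\operatorname{Hom}_G(-,P)$ you must know that $P$ remains injective in a larger category than the one in which projectivity was assumed. This is in fact true for quasireductive supergroups, but it is an extra input. The paper bypasses this by using the concrete structure of projectives: any projective $P$ is a direct summand of $\operatorname{Ind}^{\mathfrak g}_{\mathfrak g_0}S$ for some semisimple $\mathfrak g_0$-module $S$, and one checks directly that
\[
\operatorname{Res}_K\operatorname{Ind}^{\mathfrak g}_{\mathfrak g_0}S\ \simeq\ \operatorname{Ind}^{\mathfrak k}_{\mathfrak k_0}\bigl(S\otimes S^\bullet(\mathfrak g_1/\mathfrak k_1)\bigr),
\]
which is induced from a semisimple $\mathfrak k_0$-module and hence projective over $K$. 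This argument never leaves finite-dimensional modules because $\mathfrak g_1/\mathfrak k_1$ is purely odd, so $S^\bullet(\mathfrak g_1/\mathfrak k_1)$ is finite-dimensional.
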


\begin{proof} This result is a slight generalization of Proposition 1 in \cite{VeraCaroII}. We consider an injective resolution 
$0\to R^0\to R^1\to\dots$ of $M$ in the category of $Q$-modules. Since $\operatorname{Hom}_G(P,\cdot)$ is an exact functor, 
$\operatorname{Hom}_G(P,H^i(G/Q,M))$
is given by the $i$-th cohomology group of the complex
$$0\to \operatorname{Hom}_G(P,H^0(G/Q,R^0))\to \operatorname{Hom}_G(P,H^0(G/Q,R^1))\to\dots.$$
The Frobenius reciprocity implies 
$$\operatorname{Hom}_G(P,H^0(G/Q,R^j))\simeq \operatorname{Hom}_Q(P,R^j).$$
Thus, we obtain the isomorphism
$$\operatorname{Hom}_G(P,H^i(G/Q,M))\simeq \operatorname{Ext}^i_Q(P,M).$$
We now need the following lemma.

\begin{leme} The restricted module $\operatorname{Res}_KP$ is projective in the category $K-\operatorname{mod}$.
\end{leme}
\begin{proof} Note that $P$ is a direct summand of the induced module $\operatorname{Ind}^{\mathfrak g}_{\mathfrak g_0}S$ for some semisimple 
$\mathfrak g_0$-module $S$.
Using the isomorphism
$$\operatorname{Res}_K\operatorname{Ind}^{\mathfrak g}_{\mathfrak g_0}S\simeq \operatorname{Ind}^{\mathfrak k}_{\mathfrak k_0}S\otimes S^\bullet(\mathfrak g_1/\mathfrak k_1),$$
we obtain that $P$ is a direct summand of some module induced from a semisimple $\mathfrak k_0$-module. Therefore $P$ is projective as a $K$-module.
\end{proof}

Applying the above lemma we can use the Koszul complex $\Lambda^i(\mathfrak r)\otimes \mathcal U(\mathfrak r)\otimes P$ (where $\mathcal U(\mathfrak r)$ is the universal enveloping algebra of $\mathfrak r$) and thus obtain an isomorphism
$$\operatorname{Ext}^i_Q(P,M)\simeq \operatorname{Hom}_K(H_i(\mathfrak r,P),M).$$
Now we use the double dualization and the fact that $P^*$ is also projective:
$$\operatorname{Hom}_G(\Gamma_i(M),P)\simeq
\operatorname{Hom}_G(P^*,H^i(G/Q,M^*))\simeq \operatorname{Hom}_K(H_i(\mathfrak r,P^*),M^*)\simeq$$
$$\simeq \operatorname{Hom}_K(M,H^i(P)).$$
Hence the statement.
\end{proof}

Recall that for any quasireductive supergroup every projective module is injective and vice versa, \cite{Vera}.

\begin{coro} If $P$ is an injective (equivalently, projective) $G$-module, then $H^i(P)$ is an injective and projective $K$-module. 
\end{coro}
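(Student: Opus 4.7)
The plan is to derive $K$-injectivity of $H^i(P)$ directly from the adjunction in the previous proposition; projectivity will then follow because projective and injective $K$-modules coincide for quasireductive supergroups. Concretely, I will show that any short exact sequence $0 \to M' \to M \to M'' \to 0$ of $K$-modules induces a surjection $\operatorname{Hom}_K(M, H^i(P)) \twoheadrightarrow \operatorname{Hom}_K(M', H^i(P))$.

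Viewing the sequence as one of $Q$-modules with trivial $\mathfrak{r}$-action, and dualizing the standard long exact sequence of sheaf cohomology on $G/Q$ applied to $0 \to (M'')^* \to M^* \to (M')^* \to 0$, one obtains a long exact sequence
$$\cdots \to \Gamma_{j+1}(M'') \to \Gamma_j(M') \to \Gamma_j(M) \to \Gamma_j(M'') \to \Gamma_{j-1}(M') \to \cdots \to \Gamma_0(M'') \to 0$$
in $G$-mod. Since $P$ is projective and hence injective, $\operatorname{Hom}_G(-,P)$ is exact; applying it and then invoking the adjunction of the previous Proposition yields the long exact sequence
\begin{multline*}
0 \to \operatorname{Hom}_K(M'', H^0(P)) \xrightarrow{g_0} \operatorname{Hom}_K(M, H^0(P)) \xrightarrow{h_0} \operatorname{Hom}_K(M', H^0(P)) \\
\xrightarrow{f_0} \operatorname{Hom}_K(M'', H^1(P)) \xrightarrow{g_1} \operatorname{Hom}_K(M, H^1(P)) \xrightarrow{h_1} \operatorname{Hom}_K(M', H^1(P)) \xrightarrow{f_1} \cdots
\end{multline*}
in which, by naturality of the adjunction in $M$, the map $g_j$ is the pullback along $M \twoheadrightarrow M''$ and $h_j$ is the pullback along $M' \hookrightarrow M$.

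The decisive observation is that each $g_j$ is automatically injective, because any $K$-homomorphism $M'' \to H^j(P)$ whose pullback to $M$ vanishes must itself vanish ($M \to M''$ being surjective). Exactness at $\operatorname{Hom}_K(M'', H^{j+1}(P))$ then forces the connecting map $f_j$ to be zero, and exactness at $\operatorname{Hom}_K(M', H^j(P))$ now yields surjectivity of $h_j$ --- which is precisely the injectivity criterion for $H^j(P)$.

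The only technical point requiring care is the naturality in $M$ of the canonical isomorphism from the previous Proposition, since this is what identifies $g_j$ and $h_j$ with the pullback maps. But each step in its proof (Frobenius reciprocity, the Koszul resolution, and double duality) is manifestly functorial in $M$, so this identification is routine rather than an obstacle.
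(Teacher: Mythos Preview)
Your argument is correct. The paper states this Corollary without proof, treating it as an immediate consequence of the adjunction $\operatorname{Hom}_G(\Gamma_i(M),P)\simeq\operatorname{Hom}_K(M,H^i(P))$ from the preceding Proposition; your proof supplies precisely the standard details---the long exact sequence in $\Gamma_j$, exactness of $\operatorname{Hom}_G(-,P)$ by injectivity of $P$, and the observation that each $g_j$ is automatically injective, forcing the connecting maps to vanish---that make this ``immediate'' deduction rigorous.
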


\section{Two functors on $\mathcal F$}

We choose a parabolic subalgebra $\mathfrak p$ which can be either $\mathfrak p _{\underline m, n}$ or $\mathfrak p _{m, \underline n}$ in $\mathfrak g _{m,n}$, where:

$\mathfrak p _{\underline m, n} = \mathfrak g _{m-1,n}\oplus \mathbb Cz \oplus \mathfrak r _{\underline m,n}$, $\mathfrak g _{m,n} = \mathfrak p _{\underline m, n} \oplus 
\mathfrak r ^- _{\underline m,n}$

$\mathfrak p _{m, \underline n} = \mathfrak g _{m,n-1}\oplus \mathbb Cz \oplus \mathfrak r _{m,\underline n}$, and $\mathfrak g _{m,n} = \mathfrak p _{m,\underline n} \oplus 
\mathfrak r ^- _{m,\underline n}.$

Denote by $Z$ the center of the reductive part of the parabolic subgroup $P$ corresponding to the parabolic subalgebra we chose above (the Lie algebra of $Z$ is $\mathbb C z$). For any $a \in \mathbb Z$ we denote by $\mathbb C _a$ the corresponding character of $Z$. Since $Z$ is a one-parameter subgroup of the torus $T_{m,n}$, if $\varpi$ is the corresponding weight in the dual, we denote by $\mathbb C _{a\varpi}$ the associated $T_{m,n}$-module (in our case, $\varpi$ is either $\varepsilon _n$ or $\delta _n$). Now, if $M\in \mathcal F _{m-1,n}$ or $\mathcal F_{m,n-1}$, denote $\mathbb C _a \boxtimes M$ the $P$-module with trivial action of the corresponding nilradical $\mathfrak R$ and the given action of $ Z \times G_{m-1,n}$, or $Z \times G_{m,n-1}$ depending on the way the parabolic is chosen.

\begin{defi}\label{defigamma} We define the following functors:

$$\Gamma _i ^a: \mathcal F \rightarrow \mathcal F, a \in \frac{1}{2}+ \mathbb Z$$
if $a>0$, if $M\in \mathcal F_{m-1,n}$,  $\Gamma _i ^a (M):= H^i(G_{m,n}/P_{\underline{m},n}, \mathcal L(\mathbb C _{(a - (m-n-\frac{1}{2}))\varepsilon_m}\boxtimes M)^*)^*,$

\noi if $a<0$, if $M\in \mathcal F_{m,n-1}$,  $\Gamma _i ^a (M):= H^i(G_{m,n}/P_{m,\underline{n}}, \mathcal L(\mathbb C _{(-a - (n-m-\frac{1}{2}))\delta_n}\boxtimes M)^*)^*.$

$$H_b ^j: \mathcal F \rightarrow \mathcal F, b \in  \frac{1}{2}+ \mathbb Z$$
if $b>0$, if $M\in \mathcal F _{m,n}$, $H_b ^j(M):= \operatorname{Hom} _Z(\mathbb C _{(b-(m-n-\frac{1}{2}))\varepsilon_m}, H^j(\mathfrak r_{\underline{m} , n}, M)) \in \mathcal F _{m-1,n}$

\noi if $b<0$, if $M\in \mathcal F _{m,n}$, $H_b ^j(M):= \operatorname{Hom} _Z(\mathbb C _{(-b-(n-m-\frac{1}{2}))\delta_n}, H^j(\mathfrak r_{m,\underline{n}}, M)) \in \mathcal F _{m,n-1}$.
\end{defi}

Now, we consider the following operators in $\mathcal K$: if $M\in \mathcal F _{m,n}$,  denoting the sign of a half-integer $x$ by $sgn(x)$:
$$\gamma ^a([M]):= sgn(a)^{m}\sum _{i\geq 0} (-1)^i[\Gamma ^a _i(M)]$$
$$\eta _b([M]):=sgn(b)^m \sum_{j\geq 0}(-1)^j[H_b ^j(M)].$$

Applying the results of the previous section, we get:

\begin{prop}\label{adjoint} Consider the pairing $\mathcal K(L)\times \mathcal K(P)\to Z$ defined by
$$\langle [M],[P]\rangle:=\operatorname{dim}\operatorname{Hom}_{G_{m,n}}(M,P)$$
for every projective $P\in\mathcal F_{m,n}$ and every $M\in\mathcal F_{m,n}$.
Then for any $a\in\frac{1}{2}+\mathbb Z$ we have
$$\langle M,\eta_a[P]\rangle=\langle \gamma^a[M], [P]\rangle.$$
Let us restrict those linear operators  to $\mathcal K (\mathcal E)$. Then for every $a\in \frac{1}{2}+\mathbb Z$, the linear operators $\gamma^a$ and 
$\eta_a$ are mutually adjoint. 
\end{prop}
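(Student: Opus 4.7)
The plan is to derive the first identity from the adjunction
$$\operatorname{Hom}_G(\Gamma_i(N),P)\simeq \operatorname{Hom}_K(N,H^i(P))$$
of the preceding proposition (valid for projective $P$), together with a $Z$-weight decomposition and some elementary sign bookkeeping.

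Treat $a>0$ first; the case $a<0$ is entirely parallel, with $P_{m,\underline n}$ and $\delta_n$ in place of $P_{\underline m,n}$ and $\varepsilon_m$. Set $c=(a-(m-n-\tfrac{1}{2}))\varepsilon_m$ and $K=G_{m-1,n}\times Z$. By definition $\Gamma_i^a(M)=\Gamma_i(\mathbb C_c\boxtimes M)$, so applying the preceding proposition with the $K$-module $N=\mathbb C_c\boxtimes M$ and any projective $P\in\mathcal F_{m,n}$ yields
$$\operatorname{Hom}_{G_{m,n}}(\Gamma_i^a(M),P)\simeq \operatorname{Hom}_K(\mathbb C_c\boxtimes M,H^i(\mathfrak r_{\underline m,n},P)).$$
Decomposing the $K$-module $H^i(\mathfrak r_{\underline m,n},P)$ into $Z$-weight spaces collapses the right-hand side to $\operatorname{Hom}_{G_{m-1,n}}(M,H_a^i(P))$ in view of Definition \ref{defigamma}. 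Taking dimensions and alternating sums over $i$ gives
$$\Bigl\langle \sum_i(-1)^i[\Gamma_i^a(M)],[P]\Bigr\rangle=\Bigl\langle [M],\sum_i(-1)^i[H_a^i(P)]\Bigr\rangle.$$

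The only slightly subtle step is to multiply through by the sign factors $sgn(a)^\bullet$ and check they agree on the two sides. The source of $\gamma^a$ (where $M$ lives) is $\mathcal F_{m-1,n}$ for $a>0$ and $\mathcal F_{m,n-1}$ for $a<0$, while the source of $\eta_a$ (where $P$ lives) is $\mathcal F_{m,n}$ in both cases. For $a>0$ every power of $sgn(a)$ equals $1$ and the signs drop out; for $a<0$ both sides acquire $(-1)^m$, the first index being unchanged on each side of the pairing. Either way one obtains $\langle\gamma^a[M],[P]\rangle=\langle[M],\eta_a[P]\rangle$ for every $[M]\in\mathcal K$ and every projective $P$.

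For the final assertion, the corollary following the preceding proposition gives that $H^j$ preserves projectivity, so $\eta_a$ sends $\mathcal K(P)$ to itself; the analogous stability of $\mathcal K(\mathcal E)$ under $\gamma^a$ follows from combining Proposition \ref{penrem} with the character formula of $\mathcal E(\lambda)$ (i.e. from the classical Borel-Weil-Bott theorem applied to $S^\bullet(\mathfrak r)\otimes M^*$). Under the identification $\mathcal K(\mathcal E)\otimes\mathbb Q=\mathcal K(P)\otimes\mathbb Q$ recalled just above Proposition 2 of this section, the scalar product coincides with the restriction of the pairing of the first part, so the relation just proved specializes to the claimed adjointness. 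The only real obstacle in this argument is the sign-matching in the previous paragraph; all of the geometric content is already packaged in the preceding proposition.
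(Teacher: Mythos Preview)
Your proposal is correct and is exactly what the paper means by ``applying the results of the previous section'': you invoke the adjunction $\operatorname{Hom}_G(\Gamma_i(N),P)\simeq\operatorname{Hom}_K(N,H^i(P))$ with $N=\mathbb C_c\boxtimes M$, split off the $Z$-weight, take alternating sums, and check the $sgn(a)^m$ factors match (they do, since for $a<0$ both $M\in\mathcal F_{m,n-1}$ and $P\in\mathcal F_{m,n}$ carry the same first index $m$). The paper gives no further argument than the sentence introducing the proposition, so your write-up is in fact more detailed than the original; the stability of $\mathcal K(\mathcal E)$ under $\gamma^a$ and $\eta_a$ that you justify via Proposition~\ref{penrem} and the corollary on projectivity is also established in the paper only later, in Proposition~\ref{CliffHouse}.
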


We can identify the Grothendieck ring with the ring of characters of finite dimensional modules (cf \cite{VeraCaroII}) and so we will check the relations we need at the level of characters.

We recall the following formula (\cite{VeraCaroI}, prop. 1): for $P\subset G$ a parabolic subgroup of a quasireductive supergroup with Levi part $L$,

$$\sum _i (-1)^i Ch(H^i(G/P, \mathcal L (M^*))^*)=D \sum_{w\in W}\varepsilon (w) w\left(\frac{e^\rho Ch(M)}{\Pi_{\alpha \in \Delta^+_{1,\mathfrak l}}(1+e^{-\alpha})}\right),$$
where $D:=\frac{D_0}{D_1}$, $D_0= \Pi_{\alpha \in \Delta _0 ^+}(e^{\alpha /2}-e^{-\alpha /2})$, $D_1= \Pi_{\alpha \in \Delta _1^+}(e^{\alpha /2}+e^{-\alpha /2})$, and the various $\Delta$ have the obvious composition (roots of $\mathfrak g$ if no other index, roots corresponding to a subalgebra if the subalgebra appears as index).

\begin{prop}\label{CliffHouse} Let $\nu=(a_m,\ldots , a_1\vert b_1, \ldots, b_n)\in\Lambda^+_{m,n}$.  Then one has:
\begin{enumerate}

\item $a>0$, 
if $\exists i$ s.t. $a_{i+1}>a>a_i$ ,$$\gamma ^a (\mathcal E(\nu))= (-1)^{m-i}\mathcal E (a_m, \ldots , a_{i+1}, a , a _{i}, \ldots a_1\vert b_1,\ldots, b_n),$$ 

and $\gamma ^a (\mathcal E(\nu))=0$  if  $\exists i$, $a=a_i$.

\item $a<0$, 
if  $\exists i$ s.t. $b_{i}<-a<b_{i+1}$, $$\gamma ^a (\mathcal E(\nu))= (-1)^{n-i}\mathcal E (a_m, \ldots a_1\vert b_1,\ldots, b_i, -a, b_{i+1},\ldots, b_n),$$

and $\gamma ^a (\mathcal E(\nu))=0$ if $\exists i, \; a=-b_i.$

\item  $b>0$, 
if $\exists i$ s.t. $b = a_i$ $$\eta _b(\mathcal E (\nu)) = (-1)^{m-i}\mathcal E(a_m, \ldots, a_{i+1}, a_{i-1},\ldots , a_1\vert b_1 , \ldots , b_n)$$

if $b\neq a_i \forall i$, $\eta _b(\mathcal E (\nu)) =0$.

\item $b<0$, 
if $\exists i$ s.t. $b = -b_i$ $$\eta _b(\mathcal E (\nu)) = (-1)^{n-i}\mathcal E(a_m, \ldots , a_1\vert b_1 , \ldots , b_{i-1}, b_{i+1},\ldots, b_n)$$

if $b\neq -b_i \forall i$, $\eta _b(\mathcal E (\nu)) =0$.

\end{enumerate}
\end{prop}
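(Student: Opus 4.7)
My approach is to establish, at the level of characters, the identities $\gamma^a(\mathcal E_{m,n}(\nu)) = \mathcal E_{m+1,n}(\nu + a\varepsilon_{m+1})$ for $a>0$ and $\gamma^a(\mathcal E_{m,n}(\nu)) = (-1)^m \mathcal E_{m,n+1}(\nu+(-a)\delta_{n+1})$ for $a<0$, where on the right-hand side $\mathcal E$ is evaluated at weights that generally lie outside $\Lambda^+$. Cases (1) and (2) then follow by applying (\ref{Weyl}) to bring these weights into $\Lambda^+$-form, and cases (3) and (4) are deduced from (1) and (2) by adjointness.

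For the identity in case (1), I apply the character formula for Euler-induced cohomology (recalled just before the proposition) with the parabolic $\mathfrak p_{\underline{m+1},n}\subset\mathfrak g_{m+1,n}$, Levi $\mathfrak g_{m,n}\oplus\mathbb Cz$, and input module $M=\mathbb C_{c\varepsilon_{m+1}}\boxtimes\mathcal E_{m,n}(\nu)$ with $c=a-m+n-\frac{1}{2}$ as in Definition~\ref{defigamma}. Substituting $Ch(\mathcal E_{m,n}(\nu))=D^{(m,n)}\sum_{u\in W_{m,n}}\varepsilon(u)e^{u(\nu)}$ produces a double Weyl sum over $W_{m+1,n}\times W_{m,n}$; using that $W_{m,n}$ sits in $W_{m+1,n}$ as the stabilizer of $\varepsilon_{m+1}$ and acts trivially on the $Z$-character, the inner sum is absorbed into the outer via a coset decomposition $W_{m+1,n}=\bigsqcup_v vW_{m,n}$. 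The shift $c$ is tuned so that $e^{\rho^{(m+1,n)}}e^{c\varepsilon_{m+1}}D^{(m,n)}/\prod_{\alpha\in\Delta^+_{1,\mathfrak l}}(1+e^{-\alpha})$ collapses into $D^{(m+1,n)}e^{\nu+a\varepsilon_{m+1}}$ after symmetrization, so the output equals $Ch(\mathcal E_{m+1,n}(\nu+a\varepsilon_{m+1}))$. Case (2) is parallel, using $\mathfrak p_{m,\underline{n+1}}$ and shift $c'=-a-n+m-\frac{1}{2}$; the $(-1)^m$ factor comes from $\operatorname{sgn}(a)^m$ in the definition.

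For the reordering in case (1): when $a_{i+1}>a>a_i$, the $\varepsilon$-coordinates of $\nu+a\varepsilon_{m+1}$ read $(a_1,\ldots,a_m,a)$ at positions $1,\ldots,m+1$, and the cycle $\varepsilon_{i+1}\mapsto\varepsilon_{i+2}\mapsto\cdots\mapsto\varepsilon_{m+1}\mapsto\varepsilon_{i+1}$ in $W(B_{m+1})\subset W_{m+1,n}$ --- of length $m-i+1$ and sign $(-1)^{m-i}$ --- reorders the weight into its $\Lambda^+_{m+1,n}$ form. When $a=a_j$, the transposition $(\varepsilon_j,\varepsilon_{m+1})$ fixes the weight but has sign $-1$, so $\mathcal E_{m+1,n}(\nu+a\varepsilon_{m+1})=0$ by (\ref{Weyl}). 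Case (2) proceeds similarly with a cycle in $W(C_{n+1})$ of sign $(-1)^{n-i}$. Cases (3) and (4) follow from Proposition~\ref{adjoint} combined with the orthonormality of $\{\mathcal E(\lambda)\}_{\lambda\in\Lambda^+}$ already established: since $\gamma^b$ sends each orthonormal basis element to a signed basis element (or zero), its adjoint $\eta_b$ has a matrix that is the transpose of that of $\gamma^b$, so the formulas for $\eta_b$ can be read off from those for $\gamma^b$ by the combinatorics of removing the appropriate entry from $\nu$. The main obstacle is the character calculation described in the second paragraph: controlling the interplay of $\rho^{(m+1,n)}$, the $D$-factors for both $\mathfrak g_{m,n}$ and $\mathfrak g_{m+1,n}$, and the odd-Levi denominator $\prod_{\alpha\in\Delta^+_{1,\mathfrak l}}(1+e^{-\alpha})$ so that everything collapses to $Ch(\mathcal E_{m+1,n}(\nu+a\varepsilon_{m+1}))$ with no extraneous factor; the specific shift $c$ in Definition~\ref{defigamma} is designed precisely to make this match work.
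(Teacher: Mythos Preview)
Your approach is correct and reaches the same endpoint as the paper --- the key identity $\gamma^a(\mathcal E(\nu))=\mathcal E(\nu')$ with $\nu'$ obtained from $\nu$ by appending the new coordinate, followed by an application of (\ref{Weyl}) to reorder, and then adjointness for (3) and (4) --- but the route to the key identity is genuinely different.

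The paper does not compute characters directly. Instead it invokes Theorem~1 of \cite{VeraCaroI}, a Leray-type statement for the two-step fibration $G_{m+1,n}/B\to G_{m+1,n}/P_{\underline{m+1},n}$: the alternating sum over the fibre cohomology followed by the alternating sum over the base cohomology equals the alternating sum over $G_{m+1,n}/B$. Taking the one-dimensional $B$-module $\mathbb C_\lambda$ with $\lambda+\rho_B=(a,a_m,\dots,a_1\mid b_1,\dots,b_n)$, the inner step produces $\mathcal E(\nu)$ tensored with the right $Z$-character, the outer step is exactly $\Gamma^a$, and the right-hand side is by definition $\mathcal E(\lambda)$. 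This bypasses the double Weyl sum, the coset decomposition $W_{m+1,n}=\bigsqcup vW_{m,n}$, and the bookkeeping with $\rho$, $D^{(m,n)}$, $D^{(m+1,n)}$ and the odd-Levi denominator that you flag as the main obstacle --- all of that is absorbed into the cited theorem.

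What each approach buys: the paper's argument is essentially one line once the two-step result is available, and it makes transparent why the shift $c$ is what it is (it is just the $\rho_B$-shift restricted to the new coordinate). Your approach is self-contained --- it uses only the character formula recalled in the text and does not appeal to an external spectral-sequence lemma --- but the cancellation you describe is genuinely delicate to carry out and you have only sketched it. If you want to complete your version, the cleanest way is to observe that the expression inside the $W_{m+1,n}$-sum is already $W_{m,n}$-antisymmetric (because $Ch(M)$ is), so each coset contributes $|W_{m,n}|$ identical terms; then one checks that $e^{\rho^{(m+1,n)}}e^{c\varepsilon_{m+1}}/\prod_{\alpha\in\Delta^+_{1,\mathfrak l}}(1+e^{-\alpha})$ times $D^{(m,n)}$ differs from $D^{(m+1,n)}e^{a\varepsilon_{m+1}}$ only by factors that are $W_{m+1,n}$-symmetric up to sign, which is precisely where the value of $c$ enters. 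Your deduction of (3) and (4) from adjointness and orthonormality is correct and matches the paper's.
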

\begin{pf} We will only prove (1), since (2) is analogous and then (3) and (4) follow by adjointness. Let us use \cite{VeraCaroI}, Theorem 1: one has, if $M$ is a $B$-module,
$$\sum_{i,j}
(-1)^{i+j}[H^i(G_{m,n}/P_{\underline{m},n}, \mathcal L(H^j(P_{\underline{m},n}/B, \mathcal L(M^*)))^*]= 
\sum_k (-1)^k[H^k(G_{m,n}/B,\mathcal L(M^*))^*].$$
We take for $M$ the 1-dimensional representation $\mathbb C_\lambda$ with $\lambda +\rho_B= (a, a_{m}, \ldots , a_1 \vert b_1, \ldots, b_n)$. Then, 
using the equation (\ref{Weyl}), and the definition of $\gamma^a$, we get
$$\gamma ^a(\mathcal E(\nu))= \mathcal E(\lambda)=(-1)^{m-i}(a_m,\ldots, a_{i+1},a,a_{i},\ldots , a_1\vert b_1, \ldots b_n)$$
for the index $i$ of the statement. Hence the proposition.

\end{pf}

\section{Link with the Clifford algebra}

Let us now interpret the map $f$ of section \ref{Fock} in terms of the functors described in the previous section. The proposition \ref{CliffHouse} has the following immediate corollary:

\begin{coro} One has:

$f\circ \gamma^a = v_a \circ f$ for $a>0$,

$f\circ \gamma ^a = w_a\circ f$ for $a<0$,

$f\circ \eta _b = w_b \circ f$ for $b>0$,

$f\circ \eta _b = v_b \circ f$ for $b<0$,

\noi where $v_a$, $w_b$ stand for the action on the Fock space of the corresponding elements of the Clifford algebra. 
\end{coro}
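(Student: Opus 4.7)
The plan is to verify the four identities by direct case-by-case comparison on the basis $\{\mathcal E(\nu)\}_{\nu\in\Lambda^+}$: on the one hand, Proposition~\ref{CliffHouse} gives the combinatorial action of $\gamma^a$ and $\eta_b$ on $\mathcal E(\nu)$; on the other, the Clifford-action formulas of Section~\ref{Fock} describe the action of $v_a$, $w_a$ on the semi-infinite monomial $f(\mathcal E(\nu))$. It suffices to check that both sides produce the same scalar multiple of the same basis vector.

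First I would unwind $f(\mathcal E(\nu))$ for $\nu=(a_m,\ldots,a_1\mid b_1,\ldots,b_n)\in\Lambda^+_{m,n}$: it is the decreasing semi-infinite wedge whose initial $m$ factors are $v_{a_m},\ldots,v_{a_1}$ and whose tail is obtained from the vacuum $v_{-1/2}\wedge v_{-3/2}\wedge\cdots$ by deleting $v_{-b_1},\ldots,v_{-b_n}$. The four operations of Proposition~\ref{CliffHouse} then correspond, on the Fock side, to wedging $v_a$ on the left (case~(1), $a>0$), contracting out the tail factor $v_a$ (case~(2), $a<0$), contracting out the top factor $v_b$ (case~(3), $b>0$), and re-inserting the missing factor $v_b$ into the tail (case~(4), $b<0$). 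In each case, the underlying monomial produced by the Clifford action coincides with $f$ applied to the class named in Proposition~\ref{CliffHouse}; the vanishing conditions are automatic, since $v\wedge v=0$ in the Clifford algebra and a contraction against an absent factor is zero.

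What remains is a sign audit, carried out case by case. Case~(1) is immediate: permuting the freshly wedged $v_a$ past $v_{a_m},\ldots,v_{a_{i+1}}$ costs $m-i$ transpositions, reproducing the $(-1)^{m-i}$ of Proposition~\ref{CliffHouse}(1) together with the trivial factor $sgn(a)^m=1$. Case~(3) is equally direct, since $v_{a_i}$ sits at position $m-i+1$ in the wedge and the formula $\sum_j(-1)^{j-1}(w,v_{i_j})$ returns $(-1)^{m-i}$. Cases~(2) and~(4) are the delicate ones: the factor being contracted or created lies inside the tail, and its position in the wedge combines the $m$-element top block with a count of the $b_j$'s that precede it. The normalization $sgn(a)^m$ (resp.\ $sgn(b)^m$) built into the definitions of $\gamma^a$ and $\eta_b$ is included precisely to absorb the parity contribution of the top block, so that what remains reduces to the $(-1)^{n-i}$ recorded in Proposition~\ref{CliffHouse}(2) and~(4). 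This sign matching is the main obstacle; once verified on each basis class, the four identities extend by $\mathbb Z$-linearity to all of $\mathcal K(\mathcal E)$.
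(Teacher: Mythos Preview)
Your approach is exactly what the paper has in mind: the corollary is stated there as an immediate consequence of Proposition~\ref{CliffHouse}, with no further argument given, and you have simply made explicit the basis-by-basis comparison and the accompanying sign bookkeeping that this ``immediate'' entails. Your sign audit in cases~(2) and~(4) is left at the level of a sketch, but that matches the paper's own level of detail.
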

This gives us an action of the Clifford algebra on the Grothendieck group $\mathcal K (\mathcal E)$.
\begin{theo} The operators $\gamma ^a$ and $\eta _b$ ($a$, $b\in \frac{1}{2}+\mathbb Z$) in the Grothendieck group $\mathcal K$ satisfy the Clifford relations:
$$\eta_a\eta_b+\eta_b\eta_a=0,\quad \gamma^a\gamma^b+\gamma^b\gamma^a=0,\quad\gamma^a\eta_b+\eta_b\gamma^a=\delta_{a,b}.$$
\end{theo}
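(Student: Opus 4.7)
The plan is to exploit the preceding corollary, which identifies the operators $\gamma^a$ and $\eta_b$ on $\mathcal{K}(\mathcal{E})$, via the map $f$, with the Clifford generators $v_a, w_b$ acting on the Fock space $\mathbf{F}$. Since $\mathbf{F}$ is by construction a module over $Cl(V\oplus W)$, these wedge and contraction operators satisfy the Clifford relations on the nose, and the strategy is to pull the relations back to $\mathcal{K}$.

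First I would establish the relations on the subgroup $\mathcal{K}(\mathcal{E})\subset\mathcal{K}$. The map $f$ sends the basis $\{\mathcal{E}(\lambda)\mid\lambda\in\Lambda^+_{m,n}\}$ to a subset of the standard semi-infinite wedge basis of $\mathbf{F}$ and is in particular injective. The corollary rewrites $\gamma^a|_{\mathcal{K}(\mathcal{E})}$ as $f^{-1}\circ v_a\circ f$ when $a>0$ and as $f^{-1}\circ w_a\circ f$ when $a<0$, and similarly for $\eta_b$. Consequently the three Clifford identities satisfied by the $v_\bullet$ and $w_\bullet$ on $\mathbf{F}$ transfer verbatim to $\gamma^\bullet$ and $\eta_\bullet$ on $\mathcal{K}(\mathcal{E})$. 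The signs $\operatorname{sgn}(a)^m$ and $\operatorname{sgn}(b)^m$ that were inserted into the definitions of $\gamma^a$ and $\eta_b$ are precisely what make this identification with the fermionic wedge/contraction signs come out cleanly as one passes between $\mathcal{F}_{m,n}$ and $\mathcal{F}_{m\pm 1,n}$ or $\mathcal{F}_{m,n\pm 1}$; this is the only genuinely delicate point, and it is already absorbed in Proposition \ref{CliffHouse} and its corollary.

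Second, I would upgrade the relations from $\mathcal{K}(\mathcal{E})$ to all of $\mathcal{K}$. The earlier discussion of Grothendieck groups gives $\mathcal{K}(P)_{m,n}\otimes\mathbb{Q}=\mathcal{K}(\mathcal{E})_{m,n}\otimes\mathbb{Q}$, and the perfect pairing $\langle[P(\lambda)],[L(\mu)]\rangle=\delta_{\lambda,\mu}$ shows that $\mathcal{K}(P)_{m,n}$ and $\mathcal{K}(L)_{m,n}=\mathcal{K}_{m,n}$ have the same $\mathbb{Q}$-rank. Thus $\mathcal{K}(\mathcal{E})\otimes\mathbb{Q}=\mathcal{K}\otimes\mathbb{Q}$, so every element of $\mathcal{K}$ is a rational combination of classes $\mathcal{E}(\lambda)$. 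Because the operators $\gamma^a$ and $\eta_b$ are $\mathbb{Z}$-linear, the quadratic relations
\[
\gamma^a\gamma^b+\gamma^b\gamma^a=0,\quad \eta_a\eta_b+\eta_b\eta_a=0,\quad \gamma^a\eta_b+\eta_b\gamma^a=\delta_{a,b}\operatorname{Id},
\]
which are linear in their argument, extend from $\mathcal{K}(\mathcal{E})\otimes\mathbb{Q}$ to $\mathcal{K}\otimes\mathbb{Q}$, and then descend to $\mathcal{K}$ itself because $\mathcal{K}$ is free abelian (hence torsion-free) on the classes of the simple modules. The main obstacle, as noted, is not in this upgrade but in the bookkeeping of signs encoded in the previous section; once those are trusted, the theorem is a formal consequence of the identification with $(\mathbf{F}, v_\bullet, w_\bullet)$.
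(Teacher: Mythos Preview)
Your argument is correct and is genuinely different from the paper's. The paper, despite having just observed (in the sentence preceding the theorem) that the corollary already yields a Clifford-algebra action on $\mathcal K(\mathcal E)$, does \emph{not} deduce the theorem from this. Instead it proves the three relations directly on an arbitrary class $[M]\in\mathcal K$: the $\eta_a\eta_b$-relation via two applications of the Hochschild--Serre spectral sequence together with Kostant's theorem for an $\mathfrak{sl}(2)$-subalgebra (in the same-sign case) and a character identity $\sum(-1)^j\operatorname{Ch}\Lambda^j(\mathfrak g_\alpha)=e^{-\alpha}\sum(-1)^j\operatorname{Ch}\Lambda^j(\mathfrak g_{-\alpha})$ (in the mixed-sign case); the $\gamma^a\gamma^b$-relation by adjointness; the mixed $\gamma^a\eta_b$-relation for opposite signs by a direct character computation using Penkov's observation; and the same-sign $\gamma^a\eta_b$-case by restricting to $G_{m,0}$, pushing the computation into the semisimple world where $\mathcal K(\mathcal E)_{m,0}=\mathcal K(L)_{m,0}$ and Proposition~\ref{CliffHouse} applies verbatim, and finishing with a Koszul-complex identity for $\sum(-1)^k[\Lambda^k U][S^l U]$.

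Your route is shorter and more conceptual: the Clifford relations are automatic on the Fock model, so they hold on $\mathcal K(\mathcal E)$ via $f$, and the passage to $\mathcal K$ is the linear-algebra remark that the $\mathcal E(\lambda)$ (indexed by the same set $\Lambda^+_{m,n}$ as the simples and linearly independent by Proposition~1) form a $\mathbb Q$-basis of $\mathcal K_{m,n}\otimes\mathbb Q$, whence the operator identities extend and then restrict back to the torsion-free $\mathcal K$. What the paper's approach buys is an argument intrinsic to $\mathcal F$, carried out at the level of characters and spectral sequences for arbitrary $M$, without invoking the Fock-space picture; the authors themselves flag that this is aimed at a future functorial upgrade (``the proof we provide lacks functoriality at the moment, but we intend to improve it''), for which such module-level identities, rather than an equality of bases after rationalisation, are the natural input. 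Your argument buys economy but does not see the underlying isomorphisms of complexes.
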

\begin{pf}
Let $a$ and $b$ be half-integers. We first show that
$$\eta_a\eta_b+\eta_b\eta_a=0.$$
The arguments involved in the proof depend on the signs of $a$ and $b$, we will take care of the cases $a,b>0$ and $a>0, b<0$, leaving $a<0,b<0$ to the reader.

Assume first that $a>0, b>0$, let $M$ be a $\mathfrak g_{m,n}$ module, we consider the following increasing chain of Lie superalgebras:
$$\mathfrak g_{m-2,n}\subset \mathfrak p _{\underline{m-1},n} \subset\mathfrak g_{m-1,n}\subset \mathfrak p _{\underline{m},n} \subset\mathfrak g_{m,n}.$$
Let $\mathfrak q$ be the parabolic subalgebra with reductive part equal to the direct sum of $\mathfrak g_{m-2,n}$ and the two-dimensional center
$Z_\mathfrak q$, and the nilradical 
$\mathfrak r=\mathfrak r _{\underline{m},n}+ \mathfrak r_{\underline{m-1},n}$.
Then using the Hochschild--Serre spectral sequence for the pair $\mathfrak r_{\underline{m-1},n}\subset \mathfrak r$ we obtain
$$\eta_a\eta_b[M]=\sum _{i}(-1)^i[\operatorname{Hom}_{Z_\mathfrak q}(\mathbb C_{(b-(n-m-1/2))\varepsilon_m+(a-(n-m+1/2))\varepsilon_{m-1}}, H^i(\mathfrak r,M))],$$
and
$$\eta_b\eta_a[M]=\sum _{i}(-1)^i[\operatorname{Hom}_{Z_\mathfrak q}(\mathbb C_{(a-(n-m-1/2))\varepsilon_m+(b-(n-m+1/2))\varepsilon_{m-1}}, H^i(\mathfrak r,M))].$$
Now we consider the one-dimensional root subalgebra $\mathfrak s:=\mathfrak g_\beta\subset\mathfrak r$ for the root $\beta=\varepsilon_m-\varepsilon_{m-1}$.
Note that $\mathfrak s$ is the nilradical of a Borel subalgebra of the $\mathfrak{sl}(2)$ generated by $\mathfrak g_\beta$ and
$\mathfrak g_{-\beta}$. Hence by the Kostant theorem  we have
for any $\mathfrak{sl}(2)$-module $N$
$$[\operatorname{Hom}_{Z_\mathfrak q}(\mathbb C_{(a-(n-m-1/2))\varepsilon_m+(b-(n-m+1/2))\varepsilon_{m-1}}, H^p(\mathfrak s,N))]=$$
$$[\operatorname{Hom}_{Z_\mathfrak q}(\mathbb C_{(b-(n-m-1/2))\varepsilon_m+(a-(n-m+1/2))\varepsilon_{m-1}}, H^q(\mathfrak s,N))]$$
for $(p,q)=(0,1)$ or $(1,0)$.

Once again we apply the Hochschild--Serre spectral sequence for the pair $\mathfrak s\subset\mathfrak r$ to get
$$\eta_a\eta_b[M]=
\sum _{i}(-1)^{i+j}[\operatorname{Hom}_{Z_\mathfrak q}(\mathbb C_{(b-(n-m-1/2))\varepsilon_{m}+(a-(n-m+1/2))\varepsilon_{m-1}}, 
H^i(\mathfrak s,\Lambda^j(\mathfrak r/\mathfrak s)^*\otimes M))],$$
$$\eta_b\eta_a[M]=
\sum _{i}(-1)^{i+j}[\operatorname{Hom}_{Z_\mathfrak q}(\mathbb C_{(a-(n-m-1/2))\varepsilon_m+(b-(n-m+1/2))\varepsilon_{m-1}}, 
H^i(\mathfrak s,\Lambda^j(\mathfrak r/\mathfrak s)^*\otimes M))].$$
This implies the relation.

Now let $a>0, b<0$. Let $M$ be a $G_{m,n}$-module. Set
$$\mathfrak r:=\mathfrak r _{\underline{m},n}+ \mathfrak r_{m-1,\underline{n}},\quad \mathfrak r':=\mathfrak r _{m,\underline{n}}+ \mathfrak r_{\underline{m},n-1}.$$
Let $Z\subset T_{m,n}$ be the centralizer of $\mathfrak g_{m-1,n-1}$.
Using Hochschild--Serre spectral sequence we obtain
$$\eta_b\eta_a[M]=\sum_i(-1)^{m-1+i}[\operatorname{Hom}_Z(\mathbb C_{(a-(m-n-1/2))\varepsilon_m-(b+n-m+1/2)\delta_n}, H^i(\mathfrak r,M))]$$
and
$$\eta_a\eta_b[M]=\sum_i(-1)^{m+i}[\operatorname{Hom}_Z(\mathbb C_{(a-(m-n+1/2))\varepsilon_m-(b+n-m-1/2)\delta_n}, H^i(\mathfrak r',M))].$$
Let $\alpha=\varepsilon_m-\delta_n$. Consider the root subalgebras $\mathfrak g_{\alpha},\mathfrak g_{-\alpha}\subset \mathfrak g_{m,n}$. Note that
$\mathfrak s:=\mathfrak r\cap\mathfrak r'$ is an ideal of coduimension $1$ in both $\mathfrak r$ and $\mathfrak r'$ and that
 $$\mathfrak r=\mathfrak s+\mathfrak g_{\alpha},\quad \mathfrak r'=\mathfrak s+\mathfrak g_{-\alpha}.$$

Therefore by Hochschild--Serre spectral sequence we have
$$\eta_b\eta_a=\sum_{i,j}(-1)^{i+j+m-1}[\operatorname{Hom}_Z(\mathbb C_{(a-(m-n-1/2))\varepsilon_m-(b+n-m+1/2)\delta_n}, \Lambda^j(\mathfrak g_{-\alpha})\otimes H^i(\mathfrak s,M))],$$
$$\eta_a\eta_b[M]=\sum_{i,j}(-1)^{i+j+m}[\operatorname{Hom}_Z(\mathbb C_{(a-(m-n+1/2))\varepsilon_m-(b+n-m-1/2)\delta_n}, \Lambda^j(\mathfrak g_{\alpha})\otimes H^i(\mathfrak s,M))].$$

Taking into account that
$$\sum_{j} (-1)^jCh(\Lambda^j(\mathfrak g_{\alpha}))=\frac{1}{1+e^\alpha}=
\frac{e^{-\alpha}}{1+e^{-\alpha}}=e^{-\alpha}\left(\sum_{j} (-1)^jCh(\Lambda^j(\mathfrak g_{-\alpha}))\right)$$
we obtain
$$\sum_{j} (-1)^jCh(\Lambda^j(\mathfrak g_{\alpha})\otimes H^i(\mathfrak s,M))
=e^{-\alpha}\left(\sum_{j} (-1)^jCh(\Lambda^j(\mathfrak g_{-\alpha})\otimes H^i(\mathfrak s,M))\right).$$
Therefore
$$Ch(\eta_a\eta_b[M])=-Ch(\eta_b\eta_a[M]),$$
which proves the relation.

Note that the relation
$$\gamma^a\gamma^b+\gamma^b\gamma^a=0.$$
follows from the relation for $\eta_a,\eta_b$ by Proposition \ref{adjoint}.

Let us now show that if $a>0$ and $b<0$, then
$$\gamma^a\eta_b+\eta_b\gamma^a=0.$$

The case $a<0$ and $b>0$ is similar and we leave it to the reader.

One should keep in mind the following diagram

$$\begin{array}{ccc}
&\eta_b&\\
\mathcal  F_{m,n}&\rightarrow&\mathcal F_{m,n-1}\\
\gamma^a\downarrow& & \downarrow \gamma ^a\\
\mathcal  F_{m+1,n}&\rightarrow&\mathcal F _{m+1,n-1}\\
&\eta_b&\\
\end{array}$$

because we follow it to keep tracks of the weights.

Let us denote $Ch M_{\gamma}$ the character of $\operatorname{Hom}_Z(\mathbb C_\gamma,M)$.  Then one has:

$$\gamma ^a \eta _b Ch(M)= $$
$$=\sum_{i,j}(-1)^{i+j+m}Ch(\Gamma_j(G_{m+1,n-1}/P_{\underline{m+1}, n-1}, (\mathbb C _{(a-(m-n+1/2))\varepsilon _{m+1}}\boxtimes \Lambda ^i(\mathfrak r_{m, \underline{n}} ^*)\otimes M)_{(-b-(n-m-1/2))\delta_n})),$$

$$\eta_b\gamma ^a Ch(M)= $$
$$=\sum_{i,j}(-1)^{i+j+m+1}Ch((\Lambda^i(\mathfrak r_{m+1, \underline{n}} ^*)\otimes \Gamma _j (G_{m+1,n}/P_{\underline{m+1}, n}, \mathbb C _{(a-(m-n-1/2))\varepsilon_{m+1}}\boxtimes M))_{(-b-(n-m-3/2))\delta_n}).$$

We use Proposition \ref{penrem}. For any $G_{m,k}$-module $N$, the following holds:
$$\sum_j (-1)^j \Gamma_j (G_{m+1,k}/P_{\underline{m+1},k}, N)= \sum_j (-1)^j\Gamma_j (G_{m+1,0}/P_{\underline{m+1},0}, N\otimes S^\bullet((\mathfrak r ^* _{\underline{m+1}, k})_{\overline{1}}).$$

Then if we set:

$$X:=\sum_{i}(-1)^i \Lambda ^i(\mathfrak r _{m, \underline{n}}^*)\otimes S^\bullet((\mathfrak r _{\underline{m+1},n-1}^*)_{\overline{1}}) $$
and
$$Y:= \sum_{i}(-1)^i \Lambda ^i(\mathfrak r _{m+1, \underline{n}}^*)\otimes S^\bullet((\mathfrak r _{\underline{m+1},n}^*)_{\overline{1}}),$$
we get

$$\gamma^a \eta _b(Ch(M))=$$
$$= \sum_{j}(-1)^{j+m}Ch(\Gamma_j(G_{m+1,0}/P_{\underline{m+1},0},\mathbb C_{(a-(m-n+1/2))\varepsilon_{m+1}}\boxtimes X\otimes M)_{(-b-(n-m-1/2))\delta_n})$$
 and
 $$\eta _b\gamma^a (Ch(M))=$$
$$= \sum_{j}(-1)^{j+m+1}Ch(\Gamma_j(G_{m+1,0}/P_{\underline{m+1},0},\mathbb C_{(a-(m-n-1/2))\varepsilon_{m+1}}\boxtimes Y\otimes M)_{(-b-(n-m-3/2))\delta_n}).$$
Next we compute the quotient $Ch(X)/Ch(Y)$. One has

$$Ch(X)=\frac{(1-e^{-2\delta_n})\prod_{i=1}^{n-1} (1-e^{-\delta_n\pm\delta_i})\prod_{j=1}^{n-1} (1+e^{\pm\delta_j -\varepsilon_{m+1}})}{(1+e^{-\delta_n})\prod_{j=1}^m(1+e^{-\delta_n\pm\varepsilon_j})},$$

$$Ch(Y)=\frac{(1-e^{-2\delta_n})\prod_{i=1}^{n-1} (1-e^{-\delta_n\pm\delta_i})\prod_{j=1}^{n} (1+e^{\pm\delta_j -\varepsilon_{m+1}})}{(1+e^{-\delta_n})\prod_{j=1}^{m+1}(1+e^{-\delta_n\pm\varepsilon_j})},$$

and the quotient turns out to be 
$$Ch(X)/Ch(Y)=e^{\varepsilon _{m+1}-\delta_n}.$$
The result follows.

Let us show finally that for $a,b>0$ one has
$$\gamma ^a \eta _b + \eta _b \gamma ^a = \delta_{a,b}$$
where $\delta_{a,b}$ stands for the Kronecker symbol. The proof we provide lacks functoriality at the moment, but we intend to improve it.

Let $R:\mathcal F_{m,n}\to \mathcal F_{m,0}$ be the restriction functor and denote by the same letter the corresponding map of the Grothendieck groups. 
Then it follows from Proposition \ref{penrem} that for any $M\in \mathcal F_{m,n}$,
$$R(\gamma^a[M])=\gamma^{a+n}([S^\bullet((\mathfrak r _{\underline{m+1},n}^*)_{\overline{1}})]R[M]).$$
On the other hand, for any Lie superalgebra $\mathfrak r$ and $\mathfrak r$-module $M$ we have
$$\sum _i (-1)^i[H^i(\mathfrak r , M)]= \sum _{k,l}(-1)^{k+l}[\Lambda^l(\mathfrak r _{\overline{1}} ^*)][H^k(\mathfrak r _{\overline{0}}, M)].$$
Therefore
$$R(\eta_a[M])=\sum_k(-1)^k\eta_{a+n}([\Lambda^k((\mathfrak r _{\underline{m},n}^*)_{\overline{1}})]R[M]).$$

Therefore for $M\in \mathcal F_{m,n}$ we have
$$R(\gamma^a\eta_b[M])=
\sum_k(-1)^k\gamma^{n+a}\left([S^\bullet((\mathfrak r _{\underline{m},n}^*)_{\overline{1}})]\eta_{b+n}([\Lambda^k((\mathfrak r _{\underline{m},n}^*)_{\overline{1}})]R[M])\right),$$
$$R(\eta_b\gamma^a[M])=
\sum_k(-1)^k\eta_{b+n}\left([\Lambda^k((\mathfrak r _{\underline{m+1},n}^*)_{\overline{1}})]\gamma^{n+a}([S^\bullet((\mathfrak r _{\underline{m+1},n}^*)_{\overline{1}})]R[M])\right).$$

Let us  denote by $U$ the standard representation of $\mathfrak{sp}(2n)\subset\mathfrak {osp}(2m+1,2n)$ and consider it as purely odd superspace. Then
$$Ch((\mathfrak r _{\underline{m},n}^*)_{\overline{1}})=e^{-\varepsilon_m}Ch(U).$$
Therefore the above expressions can be rewritten in the form
$$R(\gamma^a\eta_b[M])=
\sum_{k,l}(-1)^k\gamma^{n+a-l}\left([S^l(U)]\eta_{b+n+k}([\Lambda^k(U)]R[M])\right),$$
$$R(\eta_b\gamma^a[M])=
\sum_k(-1)^k\eta_{b+n+k}\left([\Lambda^k(U)]\gamma^{n+a-l}([S^l(U)]R[M])\right).$$
Now we note that the action of $G_{m,0}$ on $U$ is trivial, hence multiplication with its exterior and symmetric powers commute with $\gamma^a$ and $\eta_b$.
Thus, we have
$$R(\gamma^a\eta_b[M])=
\sum_{k,l}(-1)^k[S^l(U)][\Lambda^k(U)]\gamma^{n+a-l}\eta_{b+n+k}(R[M]),$$
$$R(\eta_b\gamma^a[M])=
\sum_k(-1)^k([\Lambda^k(U)][S^l(U)]\eta_{b+n+k}\gamma^{n+a-l}(R[M]).$$

Since $\mathcal F_{m,0}$ is the category of representations of a purely even reductive group,
we have $\mathcal K(\mathcal E)_{m,0}=\mathcal K(L)_{m,0}$. Therefore
Proposition \ref{CliffHouse} implies that for any $N\in \mathcal F_{m,0}$ 
$$\gamma^a \eta_b [N]+ \eta_b \gamma^a [N]= \delta_{a,b}[N].$$
Hence,
$$(\gamma^a\eta_b+\eta_b\gamma^a)(R[M])= \sum_{l,k}(-1)^k[S^l(U)][\Lambda^k(U)]\delta_{a+n-k,b+n+l}R[M],$$
and hence
$$(\gamma^a\eta_b+\eta_b\gamma^a)(R[M])= \sum_{k+l=a-b}(-1)^k[S^l(U)][\Lambda^k(U)]R[M].$$
Since the Koszul complex is acyclic except in the zero degree we have the identity
$$\sum_{k+l=p}(-1)^k[\Lambda^k(U)][S^l(U)]=\left\{\begin{array}{l}1\; if \; p=0 \\ 0 \; otherwise\end{array} \right. .$$
Hence, the sum we compute has only one non-zero term, namely we get:
$$(\gamma^a\eta_b+\eta_b\gamma^a)(R[M])=\delta_{a,b}R[M].$$
Since the map $R$ is injective this proves the result for $\gamma^a\eta_b+\eta_b\gamma^a$, $a,b>0$.
The case $a,b<0$ is similar and we leave it to the reader.
\end{pf}

\section{Translation functors}

We would like to link this approach with the results on translation functors in \cite{VeraCaroII}.

Recall the Lie algebra $\mathfrak{gl}(\infty)$ which is embedded in $Cl(V\oplus W)$ as the span of $v_a w_b$, $a,b \in \frac{1}{2}+\mathbb Z$. The subalgebra $\mathfrak{gl}(\frac{\infty}{2})$ is generated by  $v_a w_b+v_{-a}w_{-b}$, $a, b \in \frac{1}{2}+\mathbb N$.

Inside the Fock space $\mathbf F$, we consider the subspace $\mathbf F _{m,n}$ which is the image of $\mathcal K (\mathcal E)_{m,n}$ under the map $f$, defined at the end of section \ref{Fock}.

\begin{rem}\label{rem} The space $\mathbf F _{m,n}$ is stable under the action of $\mathfrak{gl}(\frac{\infty}{2})$. Furthermore,
it is not difficult to see that $\mathbf F _{m,n}$ is  isomorphic to $\Lambda^m(V_+)\otimes \Lambda^n(W_+)$ as an $\mathfrak{sl}(\frac{\infty}{2})$-module, 
where $V_+$ and $W_+$ are respectively the standard and 
costandard module of $\mathfrak{gl}(\frac{\infty}{2})$.
\end{rem}

Consider the Cartan subagebra $\mathfrak t$ of $\mathfrak{gl}(\frac{\infty}{2})$ with basis 
$t_a:=v_aw_a+v_{-a}w_{-a}$ for all $a\in \frac{1}{2}+\mathbb N$ , then $\mathbf F$ is a semi-simple $\mathfrak t$-module.
We denote by $\omega$ the $\mathfrak t$-weight of the vacuum vector: $\omega(t_a)=1$ for all $a\in \frac{1}{2}+\mathbb N$.
Let $\beta_a\in \mathfrak t ^*$ be such that $\beta_a(t_b)=\delta_{a,b}$. If $\lambda=(a_m,\dots,a_1|b_1,\dots,b_n)$, then the $\mathfrak t$-weight of
$f(\mathcal E(\lambda))$ equals 
$$\beta(\lambda):=\omega+\beta_{a_1}+\dots+\beta_{a_m}- \beta_{b_1}-\dots-\beta_{b_n}.$$

\begin{leme} Let $\mathcal E(\lambda),\mathcal E(\mu)\in\mathcal K(\mathcal E)_{m,n}$. Then $\mathcal E(\lambda)$ and $\mathcal E(\mu)$ are 
in the same block of $\mathcal F_{m,n}$ if and only if the $\mathfrak t$-weights of $f(\mathcal E(\lambda))$ and $f(\mathcal E(\mu))$ coincide.
\end{leme}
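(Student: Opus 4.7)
The plan is to identify both sides of the equivalence with a common combinatorial invariant---the \emph{core} of $\lambda$. For $\lambda=(a_m,\ldots,a_1\,|\,b_1,\ldots,b_n)\in\Lambda^+_{m,n}$ I would set
$$c(\lambda):=\sum_{i=1}^m[a_i]-\sum_{j=1}^n[b_j]$$
in the free abelian group on $\frac{1}{2}+\mathbb N$. Because the $a_i$ and $b_j$ are strictly increasing positive half-integers, each coefficient of $c(\lambda)$ lies in $\{-1,0,1\}$, and $c(\lambda)$ is precisely the datum that remains after cancelling every atypical pair $a_i=b_j$.

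The implication $\beta(\lambda)=\beta(\mu)\iff c(\lambda)=c(\mu)$ is read off directly from the formula $\beta(\lambda)-\omega=\sum_i\beta_{a_i}-\sum_j\beta_{b_j}$: expanded in the basis $\{\beta_t\}_{t\in\frac{1}{2}+\mathbb N}$ of $\mathfrak t^*$, the $\beta_t$-coefficient of $\beta(\lambda)-\omega$ is exactly the $t$-th coordinate of $c(\lambda)$, so the two weights coincide iff their cores do.

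For the remaining equivalence, I would use that blocks of $\mathcal F_{m,n}$ are parametrized by central characters, together with Sergeev's theorem identifying $Z(U(\mathfrak g_{m,n}))$ with the ring of supersymmetric polynomials in $\varepsilon_i^2,\delta_j^2$. This implies that the central character of $L(\lambda-\rho_B)$ depends only on the power sums $p_{2k}(\lambda)=\sum_i a_i^{2k}-\sum_j b_j^{2k}$, $k\geq 1$. Since the $a_i$ and $b_j$ are positive and pairwise distinct, these power sums determine, and are determined by, $c(\lambda)$. Finally, by the Weyl character formula stated in Section~1, every composition factor of $\mathcal E(\lambda)$ has a highest weight in the $W_{m,n}$-orbit of $\lambda-\rho_B$, hence shares one central character, placing all of $\mathcal E(\lambda)$ in a single block. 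This closes the chain of equivalences.

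The main technical point is the appeal to Sergeev's description of the orthosymplectic centre: once it is in hand, the rest is a direct bookkeeping exercise in the $\Lambda_{m,n}$-coordinates already adopted in the paper, and the $\rho_B$-shift is harmless because it is absorbed into the half-integer parametrization of $\Lambda_{m,n}$.
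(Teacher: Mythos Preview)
Your combinatorial identification $\beta(\lambda)=\beta(\mu)\iff c(\lambda)=c(\mu)$ is correct, and the reduction of central characters to the supersymmetric power sums $p_{2k}(\lambda)$ via Sergeev's description of the centre is the right mechanism. The paper, by contrast, gives no independent argument: it simply observes $\beta(\lambda)=\omega+\gamma(\lambda)$ and cites \cite{VeraCaroII}, where the block classification in terms of the weights $\gamma(\lambda)$ is already carried out. So your route is genuinely more explicit.

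Two issues, however. First, your step~5 justification is wrong as written. The character formula for $\mathcal E(\lambda)$ does \emph{not} imply that every composition factor has highest weight in the $W_{m,n}$--orbit of $\lambda-\rho_B$; for Lie superalgebras this is generally false (composition factors of Kac or Euler modules typically have highest weights reached by odd reflections, not by $W$). The correct argument is that each cohomology group $H^i(G_{m,n}/B,\mathcal L_{\lambda-\rho_B}^*)^*$ is a $G_{m,n}$--module on which the centre acts by the Harish--Chandra character attached to $\lambda$, so the virtual module $\mathcal E(\lambda)$ sits in a single central-character block; the Weyl-type formula is not what carries this.

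Second, and more structurally: you assume that blocks of $\mathcal F_{m,n}$ are \emph{parametrized} by central characters, i.e.\ that equality of central characters is also sufficient for being in the same block. For $\mathfrak{osp}(2m+1,2n)$ this is true, but it is a theorem, not a formality---it is precisely the block description proved in \cite{VeraCaroII}. So your argument, while more unpacked than the paper's one-line citation, ultimately rests on the same external input; it repackages rather than bypasses it.
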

\begin{proof} The statement follows from the remark \ref{rem} after comparing with the weights denoted by $\gamma(\lambda)$ in \cite{VeraCaroII} (we do not keep this notation here because we have introduced a $\gamma^a$ which is not related). The relation between those $\mathfrak t$-weights is $\beta(\lambda)=\omega+\gamma(\lambda)$.

\end{proof}

Consider now the Chevalley generators of $\mathfrak{gl}(\frac{\infty}{2})$, $E_{a,a+1}$ and $E_{a+1,a}$ for all $a\in \frac{1}{2}+\mathbb N$.
As it was shown in \cite{VeraCaroII}, the categorification of the action of these generators in $\Lambda^m(V_+)\otimes \Lambda^n(W_+)$ is given by the translation 
functors:
$$T_{a+1,a}(M):=(M\otimes E)_{\beta+\beta_{a+1}-\beta_{a}},\quad T_{a,a+1}(M):=(M\otimes E)_{\beta+\beta_{a}-\beta_{a+1}},$$
where $E$ is the standard $\mathfrak g_{m,n}$-module,
we assume that the $\mathfrak g_{m,n}$-module-$M$ belongs to the block corresponding to the $\mathfrak t$-weight $\beta$, and by $(N)_{\beta'}$ we denote the projection of the $\mathfrak g_{m,n}$-module $N$
onto the block corresponding to the $\mathfrak t$-weight
$\beta'$. By abuse of notations we denote also by $T_{a+1,a}$ and $T_{a,a+1}$ the corresponding linear operators in $\mathcal K(\mathcal E)_{m,n}$.

The following statement is an immediate consequence of the remark \ref{rem} and Lemma 4 in \cite{VeraCaroII}.
\begin{prop} For all $a\in \frac{1}{2}+\mathbb N$ we have
$$f\circ T_{a+1,a}=E_{a+1,a}\circ f,\quad f\circ T_{a,a+1}=E_{a,a+1}\circ f.$$
\end{prop}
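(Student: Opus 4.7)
The plan is to verify both equalities on the basis $\{\mathcal E(\lambda)\}_{\lambda\in\Lambda^+_{m,n}}$ by computing each side as an explicit signed sum of basis vectors and matching them via Lemma 4 of \cite{VeraCaroII}.

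First, I would invoke Remark \ref{rem} and the preceding lemma to reduce the question to the equality of two vectors in the (one- or two-dimensional) $\mathfrak t$-weight space $\beta(\lambda)+\beta_a-\beta_{a+1}$ of $\mathbf F_{m,n}$. On the Fock space side, using the Clifford embedding $E_{a,a+1}=v_aw_{a+1}+v_{-a}w_{-(a+1)}$ and the explicit action of $v_c,w_c$ on semi-infinite wedges from Section \ref{Fock}, I would expand $E_{a,a+1}\cdot f(\mathcal E(\lambda))$ as a signed sum of at most two basis vectors: the summand $v_aw_{a+1}$ produces, up to sign, $f(\mathcal E(\lambda'))$, where $\lambda'$ is obtained from $\lambda=(a_m,\dots,a_1|b_1,\dots,b_n)$ by replacing an entry $a+1$ in the $(a_i)$-list by $a$ (nonzero only when $a+1\in\{a_i\}$ and $a\notin\{a_i\}$), while the summand $v_{-a}w_{-(a+1)}$ produces, up to sign, $f(\mathcal E(\lambda''))$, where $\lambda''$ is obtained by replacing an entry $a$ in the $(b_j)$-list by $a+1$ (nonzero only when $a\in\{b_j\}$ and $a+1\notin\{b_j\}$). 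The internal signs are controlled by the number of factors one skips past in the ordered wedge.

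Next, Lemma 4 of \cite{VeraCaroII} describes $T_{a,a+1}\mathcal E(\lambda)$, i.e.\ the projection of $\mathcal E(\lambda)\otimes E$ onto the block of $\mathfrak t$-weight $\beta(\lambda)+\beta_a-\beta_{a+1}$, as exactly the same signed combination of $\mathcal E(\lambda')$ and $\mathcal E(\lambda'')$, with signs determined by the positions of insertion/deletion in the sorted tuples $a_1<\dots<a_m$ and $b_1<\dots<b_n$. Applying $f$ term-by-term yields the first identity $f\circ T_{a,a+1}=E_{a,a+1}\circ f$; the second identity $f\circ T_{a+1,a}=E_{a+1,a}\circ f$ follows from the completely symmetric computation (since the orthosymplectic standard module $E$ is self-dual, the functors $T_{a,a+1}$ and $T_{a+1,a}$ are mutually adjoint, paralleling the mutual transposition of $E_{a,a+1}$ and $E_{a+1,a}$ in $\mathfrak{gl}(\frac{\infty}{2})$ under the natural bilinear form on $\mathbf F$).

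The main obstacle is aligning sign conventions: the signs in the Clifford action are dictated by the ordering of the semi-infinite wedge, while those in Lemma 4 of \cite{VeraCaroII} come from the insertion position in the sorted tuples defining $\lambda$. Both count the same transpositions, so once the dictionary $\lambda \leftrightarrow f(\mathcal E(\lambda))$ is fixed, the signs automatically match and the identities follow.
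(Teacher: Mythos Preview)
Your proposal is correct and follows essentially the same route as the paper: the paper's proof is the single sentence ``immediate consequence of Remark \ref{rem} and Lemma 4 in \cite{VeraCaroII}'', and your argument is precisely an unpacking of that sentence---computing the Clifford action of $E_{a,a\pm 1}=v_aw_{a\pm 1}+v_{-a}w_{-(a\pm 1)}$ on $f(\mathcal E(\lambda))$ and matching it against the description of $T_{a,a\pm 1}\mathcal E(\lambda)$ supplied by Lemma 4 of \cite{VeraCaroII}, with Remark \ref{rem} and the block lemma guaranteeing the target weight spaces coincide. Your version is simply more explicit about the signs and the case analysis than the paper's one-line citation.
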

\newpage


\begin{thebibliography}{10}


\bibitem{EhrigStroppel} M. Ehrig, C. Stroppel, On the category of finite dimensional representations of $OSp(r\vert 2n)$: part 1. Preprint, communicated by the authors.

\bibitem{VeraCaroI} C. Gruson, V. Serganova, Cohomology of generalized supergrassmannians and character formulae for basic classical Lie superalgebras, Proceedings of the London Mathematical Society vol. 101.3, pp. 852-892 (2010).

\bibitem{VeraCaroII} C. Gruson, V. Serganova, Bernstein-Gel'fand-Gel'fand reciprocity and indecomposable projective
 modules for
      classical algebraic supergroups, Moscow Math Journal, Volume 13, Number 2, pp. 281-313 (2013).

\bibitem{Kadv} V. Kac,
Lie superalgebras.  Advances in Math.  26  (1977), no. 1, 8--96. 

\bibitem{Penkov} I. Penkov, Borel-Weil-Bott theory for classical supergroups, J. Soviet Math. 51, pp. 2108-2140 (in russian) (1990).

\bibitem{Vera} V. Serganova, Quasireductive supergroups, in New developments in Lie Theory and its applications: Proceedings of the the seventh workshop, AMS (2011).


\bibitem{Zelevinsky} A. Zelevinsky, Representations of finite classical groups, a Hopf algebra approach, LNM 869, Springer (1981).



\end{thebibliography}
\end{document}